\numberwithin{equation}{section}
\newtheorem{theorem}[equation]{Theorem} 
\newtheorem{corollary}[equation]{Corollary} 
\newtheorem{lemma}[equation]{Lemma}
\newtheorem{proposition}[equation]{Proposition}
\newtheorem{definition}[equation]{Definition} 
\newtheorem{example}[equation]{Example}
\newtheorem{hypothesis}[equation]{Hypothesis} 
\newtheorem{remark}[equation]{Remark}
\newtheorem*{theorem*}{Theorem}
\DeclareMathOperator\Aut{Aut}
\DeclareMathOperator\Ext{Ext}
\DeclareMathOperator\gldim{gldim}
\DeclareMathOperator\id{id}
\DeclareMathOperator\lcm{lcm}
\DeclareMathOperator\ord{ord}
\DeclareMathOperator\orb{orb}
\DeclareMathOperator\pdim{pdim}
\DeclareMathOperator\stb{stb}
\newcommand\NN{\mathbb N}
\newcommand\ZZ{\mathbb Z}
\newcommand\fsl{\mathfrak{sl}}
\newcommand\bp{\mathbf p}
\newcommand\bq{\mathbf q}
\renewcommand{\int}{\mathrm{int}}
\newcommand\inv{^{-1}}
\newcommand\iso{\cong}
\newcommand\niso{\not\iso}
\newcommand\kk{\Bbbk}
\newcommand\tensor{\otimes}
\newcommand\poly{\kk[h]}
\newcommand\lnt{\kk[h^{\pm 1}]}
\newcommand\restr[2]{{
  \left.\kern-\nulldelimiterspace 
  #1 
  \vphantom{\big|} 
  \right|_{#2} 
  }}
\newcommand{\grp}[1]{{\langle {#1} \rangle}}
\renewcommand{\to}{\ensuremath{\longrightarrow}}
\renewcommand\mod{~\mathrm{mod}~}
\begin{document}

\title{Fixed rings of quantum generalized Weyl algebras}
\author[Gaddis]{Jason Gaddis}
\author[Ho]{Phuong Ho}
\address{Miami University, Department of Mathematics, Oxford, Ohio 45056} 
\email{gaddisj@miamioh.edu,hopu@miamioh.edu}

\subjclass[2010]{16W22, 16W20, 16W50, }
\keywords{Quantum generalized Weyl algebras, fixed rings}


\begin{abstract}
Generalized Weyl Algebras (GWAs) appear in diverse areas of mathematics including mathematical physics, noncommutative algebra, and representation theory. We study the invariants of quantum GWAs under finite automorphisms. We extend a theorem of Jordan and Wells and apply it to determine the fixed ring of quantum GWAs under diagonal automorphisms. We further study properties of the fixed rings including global dimension, the Calabi-Yau property, rigidity, and simplicity. 
\end{abstract}

\maketitle

\section{Introduction}

Throughout, $\kk$ is a field and all algebras are associative $\kk$-algebras. 

The Shephard-Todd-Chevalley (STC) Theorem \cite{Chev,ShTo} gives conditions for the fixed ring of a polynomial ring by a finite group of linear automorphisms to again be a polynomial ring. More recently, there has been significant interest in studying STC-like theorems in noncommutative algebra, in particular whether the fixed ring of an ($\NN$-graded) Artin-Schelter regular algebra again has this property \cite{KKZ1}.

To consider this problem outside of the $\NN$-graded setting, one could ask whether the fixed ring of a (twisted) Calabi-Yau algebra is again (twisted) Calabi-Yau.
Algebras satisfying this property have attracted much interest of late \cite{ginz,L,RRZ,RR2}. 
Since polynomial rings are (trivially) Calabi-Yau, this is indeed a reasonable generalization.

An important family of $\ZZ$-graded (twisted) Calabi-Yau algebras are the generalized Weyl algebras (GWAs) and so they serve as a good test case of the STC question in this setting.
Kirkman and Kuzmanovich \cite{KK} have proposed a version of the STC Theorem for GWAs, essentially asking when the fixed ring of a GWA again has GWA structure. We propose a strengthening of this: to determine when the fixed ring of a Calabi-Yau GWA is again a Calabi-Yau GWA.

Generalized Weyl algebras were named by Bavula \cite{B1}. They have been studied extensively by many authors prior to and post Bavula's definition. Notably, the interested reader is directed to the work of Hodges \cite{H2,H1}, Jordan \cite{Jkrull}, Joseph \cite{joseph}, Rosenberg \cite{rosenberg}, Smith \cite{smith3}, and Stafford \cite{St2}.
There are several important families of algebras that may be constructed as GWAs. In the classical case, this includes the Weyl algebras and primitive quotients of $U(\fsl_2)$.
We will primarily be concerned with a subclass known as \emph{quantum GWAs}, which includes quantum planes, quantum Weyl algebras, and primitive quotients of $U_q(\fsl_2)$.

The proposal of Kirkman and Kuzmanovich \cite{KK} mentioned above has its basis in the work of Jordan and Wells \cite{JW} and their study of fixed rings of GWAs by automorphisms that fix the base ring.
Won and the first-named author showed that it is possible to diagonalize any filtered automorphism of a classical GWA with quadratic defining polynomial so that the Jordan and Wells result may be applied \cite{GW1}. The methods and results in this paper vary wildly from these aforementioned works.

\subsection*{Results}
First, in Section \ref{sec.autos}, we classify finite subgroups of the automorphism group of a quantum GWA (Proposition \ref{prop.subgrp}). 
In Section \ref{sec.JW} we prove a generalization of the Jordan and Wells theorem (Theorem \ref{thm.gJW}). This allows us to show that, under certain mild conditions, the fixed ring of a quantum GWA by a diagonal automorphism is again a quantum GWA (Corollary \ref{cor.fixGWA1}). Here an automorphism is diagonal if it acts by scalar multiplication on a canonical generating set of these quantum GWAs. 

Finally, in Section \ref{sec.props}, we study properties of the fixed ring.
In \cite[Corollary 2.12]{GW1} it was shown that the global dimension of the fixed ring of a classical GWA could be computed easily from the global dimension of the original GWA and the order of the filtered automorphism group. For quantum GWAs, the global dimension depends on multiple factors and there are many cases that we must consider. One surprising case involves quantum GWAs with infinite global dimension whose fixed rings have finite global dimension. See Theorem \ref{thm.gldim} for a full explanation of global dimension in fixed rings of quantum GWAs.

Though this paper covers an analysis of fixed rings of quantum GWAs in the most significant cases, there is still some work to be done for a full analysis. In particular, we do not fully consider the case where the defining polynomial is \emph{symmetric} (see Definition \ref{def.sym}).

\section{GWAs and their automorphisms}
\label{sec.autos}

\begin{definition}
Let $D$ be a commutative algebra, $\sigma \in \Aut(D)$, and $a \in D$, $a \neq 0$. 
The \emph{(degree one) generalized Weyl algebra} (GWA) $D(\sigma,a)$ is generated over $D$ by $x$ and $y$ subject to the relations
\begin{align*}
&xd=\sigma(d)x, \quad yx=\sigma\inv(d)y, \quad\text{for all $d \in D$}, \\
&yx=a, \quad xy=\sigma(a).
\end{align*}
A GWA $R=D(\sigma,a)$ is \emph{classical} if $D=\poly$ and $\sigma(h)=h-\alpha$, $\alpha \in \kk^\times$. A GWA $R=D(\sigma,a)$ is \emph{quantum} if $D=\poly$ or $\lnt$ and $\sigma(h)=qh$, $q \in \kk\backslash\{0,1\}$.
\end{definition}
\noindent Some authors consider the case $D=\lnt$ above with $q$ a nonroot of unity to also be classical. We do not make that definition here to avoid confusion in our discussion. Additionally, one may define higher degree GWAs, see \cite{B1}, but we do not consider those here.

\begin{example}
The \emph{quantum planes} are quantum GWAs over $\kk[h]$ with $a=h$. The \emph{quantum Weyl algebras} are quantum GWAs over $\poly$ with $a=h-1$. Meanwhile, the minimal primitive factors of $U_q(\fsl_2)$ are certain quantum GWAs over $\lnt$ with $a$ quadratic. 
\end{example}

Bavula and Jordan considered automorphisms and isomorphisms of quantum GWAs in the case $D=\lnt$ \cite{BJ}. Richard and Solotar studied the isomorphism problem in the case $D=\poly$ \cite{RS2}. Both of these works required $q$ to be a nonroot of unity. Su\'{a}rez-Alvarez and Vivas solved the isomorphism problem in both cases, $D=\poly$ and $D=\lnt$, for $q\neq 1$. Additionally, they fully classified the automorphism group in the case $D=\poly$, again for $q \neq 1$, and gave a near complete classification when $D=\lnt$. In this section we recall their results and classify finite subgroups of automorphisms of most quantum GWAs.

By \cite[Theorem A]{SAV}, two quantum GWAs $R=D(\sigma,a)$ and $R'=D(\sigma',a')$ with parameters $q,q'$, respectively, are isomorphic if and only if $q'=q^{\pm 1}$ and there exists $\alpha \in D^\times$, $\beta \in \kk^\times$, and $\epsilon \in \{\pm 1\}$ such that $a(h) = \alpha a'(\beta h^\epsilon)$. Clearly $\epsilon=1$ if $D=\kk[h]$. This result allows us to always assume $a(h)$ is monic and of positive degree. 

Let $R=D(\sigma,a)$ be a quantum GWA with $a$ not a unit. 
Write $a = \Sigma_{i \in I} a_i h^i$ where $a_i \in \kk$ and $I \subset \ZZ$ is a finite set consisting of those $i$ such that $a_i \neq 0$. Note if $D=\kk[h]$ then in fact $I \subset \NN$.
Let $g = \gcd\{i-j: a_i a_j \neq 0\}$. If $a$ is a monomial, which occurs only when $D=\poly$, then let $C_g=\kk^\times$ and otherwise let $C_g$ be the subgroup of $\kk^\times$ consisting of $g$th roots of unity.
Fix $i_0 \in I$. If $(\gamma,\mu) \in C_g \times D^\times$, then there is an automorphism $\eta_{\gamma,\mu}$ of $R$ such that 
\begin{align}
\label{eq.eta}
\eta_{\gamma,\mu}(h) = \gamma h,\quad 
\eta_{\gamma,\mu}(y) = y\mu, \quad
\eta(x) = \mu^{-1}\gamma^{i_0} x.
\end{align}
The choice of notation $y\mu$ is intentional. In the case $D=\lnt$ we may have $\mu=h^k$, $k \in \ZZ$, and $y\mu \neq \mu y$ in general. However, we will primarily focus on the case that $\mu \in \kk^\times$.

\begin{remark}
\label{rmk.indep}
It is not difficult to see that $\eta_{\gamma,\mu}$ is independent of choice of $i_0 \in I$.
Suppose $j_0 \in I$ is a distinct choice.
Then $g \mid i_0-j_0$ and hence $\gamma^{i_0-j_0}=1$, so $\gamma^{i_0}=\gamma^{j_0}$.
When $D=\poly$ we often take $i_0=\deg_h(a)$.
\end{remark}

Let $G$ be the group generated by the $\eta_{\gamma,\mu}$.
When $q=-1$, there exists $\Omega \in \Aut(R)$ given by
\[ 
\Omega (h) = -h, \quad
\Omega (y)= x, \quad
\Omega (x) = y.\]
Note that $\Omega^2 = \id_R$.
If $R=\kk[h](\sigma,a)$ is a quantum GWA, 
then by \cite[Theorem B]{SAV}, $\Aut(R)=G$ unless $q=-1$, in which case $\Aut(R)=G \rtimes \{\Omega\}$.

The case $D=\lnt$ has added complications.

\begin{definition}
\label{def.sym}
A polynomial $a(h) \in \lnt$ is \emph{symmetric} if there exists $l \in \ZZ$ and $\delta,\lambda \in \kk^\times$ such that $a(h)=\delta h^l a(\lambda h\inv)$.
\end{definition}

If $a$ is not symmetric, then by \cite[Theorem C]{SAV}$, \Aut(R)=G$ unless $q=-1$, in which case $\Aut(R)=G \rtimes \{\Omega\}$, just as above.
If $a$ is symmetric, there exists $\Psi \in \Aut(R)$ given by
\[ 
\Psi (h) = q\inv \lambda h\inv, \quad
\Psi (y)= x, \quad
\Psi (x) = \delta\inv q^{-l} yh^{-l}.\]
In this case there are additional automorphisms $\phi \notin G$ that act diagonally on $\{h,x,y\}$.
Denote the set of automorphisms such that $\phi(h)$ is a scalar multiple of $h$ by $K$. Then there is a short exact sequence
\[ 0 \rightarrow K \rightarrow \Aut(A) \rightarrow \ZZ/2\ZZ \rightarrow 0.\]
We do not attempt to understand these automorphisms as they do not appear to be of finite order. Instead, we focus on the automorphisms $\eta_{\gamma,\mu}$. 

\begin{proposition}
\label{prop.autprops}
Let $R$ be a quantum GWA and let $\eta_{\gamma,\mu},\eta_{\gamma',\mu'} \in \Aut(R)$ for appropriate choices of parameters $\gamma,\gamma' \in \kk^\times$ and $\mu,\mu' \in D^\times$.
We have the following composition rules:
\begin{enumerate}
\item $\eta_{\gamma',\mu'}\circ\eta_{\gamma,\mu}= \eta_{\gamma' \gamma,\gamma^{\deg \mu}\mu'\mu}$,
\item $\eta_{\gamma,\mu}\inv =\eta_{\gamma^{-1},(\gamma^{\deg\mu}\mu)^{-1}}$, and
\item $\Omega\circ\eta_{\gamma,\mu} = \eta_{\gamma,\gamma^{i_0} \mu^{-1}}\circ\Omega$ in the case $q=-1$.
\end{enumerate}
\end{proposition}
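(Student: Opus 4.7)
The plan is direct verification on the generators $h, x, y$, using the defining formulas (\ref{eq.eta}) together with the GWA relations. The only subsidiary fact needed is that every $\mu \in D^\times$ is a (Laurent) monomial in $h$: in the case $D=\poly$ we simply have $\mu \in \kk^\times$ with $\deg \mu = 0$, and in the case $D=\lnt$ any unit has the form $ch^k$ with $\deg \mu = k$. Consequently $\eta_{\gamma,\mu}(\nu) = \gamma^{\deg \nu}\nu$ for any $\nu \in D^\times$, and this is essentially all one needs beyond the definition of $\eta_{\gamma,\mu}$ to run the bookkeeping.

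For (1), I would apply $\eta_{\gamma',\mu'}\circ\eta_{\gamma,\mu}$ to $h$, $y$, and $x$ in turn. On $h$ the outer parameter is immediately $\gamma'\gamma$. On $y$ and $x$ one must evaluate $\eta_{\gamma',\mu'}$ on the units $\mu^{\pm 1}$ appearing in $\eta_{\gamma,\mu}(y) = y\mu$ and $\eta_{\gamma,\mu}(x) = \mu^{-1}\gamma^{i_0}x$; the subsidiary identity contributes the factor carrying a power of a scale parameter coming from $\deg\mu$. Matching the resulting expression against $\eta_{\gamma'\gamma,\tau}$ applied to the same generator reads off $\tau$. For (2), the cleanest route is to specialize (1) with $\gamma' = \gamma^{-1}$ and solve for the unique $\mu'$ that forces the second parameter to equal $1$; the composition is then $\eta_{1,1} = \id_R$, so the chosen pair must be the inverse. (Alternatively one can verify directly that the proposed inverse undoes $\eta_{\gamma,\mu}$ on each generator.)

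For (3), apply each side to $h$, $y$, $x$. The hypothesis $q=-1$ enters precisely when $\Omega$ must be moved past an element of $D$: since $\sigma(h)=qh=-h=\Omega(h)$, the automorphism $\Omega$ and $\sigma^{\pm 1}$ agree on all monomials in $D$, which is exactly what reconciles $\Omega(y\mu) = x\,\Omega(\mu)$ with the commutation law $\mu x = x\,\sigma\inv(\mu)$, and the analogous identity on the $x$-side. The main obstacle is purely bookkeeping: every identity is proved by direct substitution on generators, and the only subtle point is tracking the power of the scale parameter produced when $\eta_{-,-}$ is evaluated on a monomial in $D^\times$ of nonzero degree, a situation arising only when $D=\lnt$. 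No deeper idea is required.
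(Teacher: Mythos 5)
Your proposal matches the paper's proof essentially verbatim: both verify (1) and (3) by direct evaluation on the generators $h,x,y$, using that units of $D$ are (Laurent) monomials so that $\eta_{\gamma,\mu}(\nu)=\gamma^{\deg\nu}\nu$ for $\nu\in D^\times$, and both obtain (2) from (1) by observing that $\eta_{1,1}=\id_R$. The only point to watch when you "read off $\tau$" in (1) is whether the scalar produced is a power of $\gamma$ or of $\gamma'$ --- it comes from applying the \emph{outer} automorphism $\eta_{\gamma',\mu'}$ to $\mu$ --- though this distinction evaporates in the case $\mu\in\kk^\times$ (i.e.\ $\deg\mu=0$) that the paper chiefly uses.
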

\begin{proof}
We will check each claim by verifying that the relation holds for the generators. Note that $\mu$ is some monomial in $\lnt$. Hence, for $\gamma \in \kk^\times$ and $\mu,\mu' \in D^\times$, we have $\eta_{\gamma,\mu}(\mu') = \gamma^{\deg \mu'} \mu'$ and $\Omega(\mu)=(-1)^{\deg\mu}$. 

For (1), 
\begin{align*}
(\eta_{\gamma',\mu'}\circ\eta_{\gamma,\mu})(h)    
    &= \eta_{\gamma',\mu'}(\gamma h)
    = (\gamma'\gamma)h
    = \eta_{\gamma\gamma',\gamma^{\deg \mu}\mu\mu'}(h)\\
(\eta_{\gamma',\mu'}\circ\eta_{\gamma,\mu})(y) 
    &= \eta_{\gamma,\mu}(y\mu) 
    = (y\mu') (\gamma^{\deg \mu} \mu) 
    = \eta_{\gamma\gamma',\gamma^{\deg \mu}\mu\mu'}(y) \\
(\eta_{\gamma',\mu'}\circ\eta_{\gamma,\mu})(x) 
    &= \eta_{\gamma',\mu'}(\gamma^{i_0} \mu^{-1} x) 
    = (\gamma^{i_0} (\gamma^{\deg \mu} \mu)^{-1})( (\gamma')^{i_0} (\mu')^{-1} x) \\
    &= (\gamma\gamma')^{i_0} (\gamma^{\deg \mu}\mu\mu')^{-1} x
    = \eta_{\gamma\gamma',\gamma^{\deg \mu}\mu\mu'}(x). 
\end{align*}
Now (2) follows directly from (1) by observing that $\eta_{1,1}$ is the identity map. For (3), 
\begin{align*}
(\Omega\circ\eta_{\gamma,\mu})(h)
    &= \Omega(\gamma h)=-\gamma h 
    = \eta_{\gamma,\gamma^{i_0} \mu^{-1}}(-h)
    = (\eta_{\gamma,\gamma^{i_0} \mu^{-1}}\circ\Omega)(h)\\
(\Omega\circ\eta_{\gamma,\mu})(y) 
    &= \Omega(y \mu )
    = (-1)^{\deg\mu} x \mu 
    = \mu x
    = \gamma^{i_0} (\gamma^{i_0} \mu\inv)\inv x
    = \eta_{\gamma, \gamma^{i_0} \mu^{-1}}(x) 
    = (\eta_{\gamma,\gamma^{i_0}\mu^{-1}}\circ\Omega)(y)\\
(\Omega\circ\eta_{\gamma,\mu})(x) 
    &= \Omega(\gamma^{i_0} \mu^{-1}x)
    = (-1)^{\deg\mu} \gamma^{i_0} \mu^{-1} y 
    = y (\gamma^{i_0}\mu\inv)
    = \eta_{\gamma, \gamma^{i_0} \mu^{-1}}(y)
    = (\eta_{\gamma,\gamma^{i_0} \mu^{-1}}\circ\Omega)(x).\qedhere
\end{align*}
\end{proof}

For a root of unity $\gamma $ we denote by $\ord(\gamma)$ its order in the multiplicative group $\kk^\times$. We use this notation also for the order of an automorphism. When $\gamma,\mu \in \kk^\times$, $\ord(\eta_{\gamma,\mu})=\lcm(\ord(\gamma),\ord(\mu))$. The following should be compared to \cite[Theorem 3.1]{GW1}. 

\begin{proposition}
\label{prop.subgrp}
Let $R=D(\sigma,a)$ be a quantum GWA
with $a$ not symmetric. If $H$ is a finite subgroup of $G=\Aut(R)$, then one of the following holds:
\begin{enumerate}
    \item $H=\grp{\eta_{\gamma,\mu}}$ for some $\gamma, \mu \in \kk^\times$,
    \item $H=\grp{\Omega \circ \eta_{\gamma,\mu}}$ for some $\gamma \in \kk^\times$ and $\mu \in D^\times$,
    or
    \item $H=\grp{\Omega \circ \eta_{\gamma,\mu},\eta_{\gamma',\mu'}}$ for some $\gamma,\gamma',\mu' \in \kk^\times$ and $\mu \in D^\times$.
\end{enumerate}
If $q\neq -1$, then only case (1) holds.
\end{proposition}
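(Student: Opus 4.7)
The plan is to build on the classification of $\Aut(R)$ already recalled: since $a$ is not symmetric, if $E$ denotes the subgroup of $\Aut(R)$ generated by the $\eta_{\gamma, \mu}$, then $\Aut(R) = E$ when $q \ne -1$, and $\Aut(R) = E \rtimes \grp{\Omega}$ when $q = -1$. Consequently, cases (2) and (3) can occur only when $q = -1$, which matches the proposition's last line. Every element of $H$ thus has the form $\eta_{\gamma, \mu}$ or $\Omega \circ \eta_{\gamma, \mu}$, and I would analyze these two types separately.

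First I would pin down which $\eta_{\gamma, \mu}$ can possibly lie in a finite subgroup. Iterating Proposition \ref{prop.autprops}(1) gives, by induction, $\eta_{\gamma, \mu}^n = \eta_{\gamma^n, \beta_n}$ for an explicit $\beta_n \in D^\times$ whose $h$-degree, when $D = \lnt$ and $\mu = c h^k$, equals $nk$. Finiteness of the order forces $nk = 0$ for some $n \ge 1$, hence $k = 0$ and $\mu \in \kk^\times$. With $\mu \in \kk^\times$ the composition rule collapses to $\eta_{\gamma', \mu'} \circ \eta_{\gamma, \mu} = \eta_{\gamma'\gamma, \mu'\mu}$, so the finite-order $\eta$-automorphisms embed as a subgroup of $\kk^\times \times \kk^\times$ via $(\gamma, \mu) \mapsto \eta_{\gamma, \mu}$. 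In case (1), where $H \subseteq E$, this identifies $H$ with a finite subgroup of $\kk^\times \times \kk^\times$, and I would conclude that this subgroup is cyclic, realized as $H = \grp{\eta_{\gamma, \mu}}$.

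For cases (2) and (3), assume $q = -1$ and $H \not\subseteq E$. Since $\Aut(R)/E \cong \ZZ/2\ZZ$, the subgroup $H \cap E$ has index $2$ in $H$; case (1) then realizes it as $\grp{\eta_{\gamma', \mu'}}$ with $\gamma', \mu' \in \kk^\times$ (possibly trivial). Choosing any $\Omega \circ \eta_{\gamma, \mu} \in H \setminus E$, Proposition \ref{prop.autprops}(3) gives $(\Omega \circ \eta_{\gamma, \mu})^2 = \eta_{\gamma^2, \gamma^{i_0 + \deg \mu}} \in E$, whose second slot lies automatically in $\kk^\times$. Hence finiteness of order requires only that $\gamma$ be a root of unity, leaving $\mu$ free in $D^\times$. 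If $H \cap E$ is trivial we land in case (2); otherwise in case (3).

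The principal obstacle is the cyclicity assertion in case (1): a generic finite subgroup of $\kk^\times \times \kk^\times$ need not be cyclic (e.g.\ $\ZZ/2\ZZ \times \ZZ/2\ZZ$), so one must lean on the precise embedding into $C_g \times \kk^\times$ together with properties of roots of unity in $\kk^\times$ to rule out non-cyclic abelian patterns. Once cyclicity is settled, the rest of the proof is essentially bookkeeping on generators via Proposition \ref{prop.autprops}.
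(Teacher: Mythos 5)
Your proposal tracks the paper's proof almost step for step: both arguments (i) show that a finite-order $\eta_{\gamma,\mu}$ forces $\mu\in\kk^\times$ with $\gamma,\mu$ roots of unity, (ii) assert that the diagonal part $K=H\cap E$ is cyclic, and (iii) when $q=-1$ use the composition rules of Proposition \ref{prop.autprops} to see that any two elements of $H\setminus E$ differ by an element of $K$, so a single coset representative $\Omega\circ\eta_{\gamma,\mu}$ suffices. Steps (i) and (iii) are correct and identical in substance to the paper's.

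The step you flag as the ``principal obstacle'' --- cyclicity of $K$ --- is a genuine gap, and you should know the paper does not close it either: its proof dispatches it with ``since automorphisms of this form commute, $K$ is generated by a single $\eta_{a,b}$,'' which is not a valid inference (finite abelian groups need not be cyclic). Moreover, your proposed repair --- leaning on the precise embedding into $C_g\times\kk^\times$ --- cannot work, because that target is a two-dimensional torus and admits non-cyclic finite subgroups as soon as $C_g$ is nontrivial. Concretely, take $D=\poly$, $a=h^2-1$, and $\chr\kk\neq 2$, so $g=2$ and $\gamma=-1$ is permitted. Since the composition rule degenerates to componentwise multiplication once $\mu\in\kk^\times$, the four automorphisms $\eta_{\pm 1,\pm 1}$ form a subgroup of $G$ isomorphic to $\ZZ/2\ZZ\times\ZZ/2\ZZ$; every nonidentity element has order $2$, so this is not $\grp{\eta_{\gamma,\mu}}$ for any single pair, nor is it of types (2) or (3). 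Thus the cyclicity claim, and with it case (1) as literally stated, fails in general; the correct conclusion is only that $H\cap E$ is a finite subgroup of $C_g\times(\text{roots of unity in }\kk^\times)$, and the proposition would need to be restated accordingly (or restricted to cyclic $H$, which is in effect what the rest of the paper uses). Your instinct to isolate this point was right; it is not mere bookkeeping.
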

\begin{proof}
Let $K$ be the subgroup of $H$ consisting of finite order automorphisms of the form $\eta_{\gamma,\mu}$. 
It is clear that $\ord(\eta_{\gamma,\mu}) < \infty$ if and only if $\mu \in \kk^\times$ and $\ord(\gamma),\ord(\mu) < \infty$.
Since automorphisms of this form commute, then $K$ is generated by a single $\eta_{a,b}$ with $a,b \in \kk^\times$. Hence, if $H=K$ then $H \iso C_k$ where $k=\lcm(\ord(a),\ord(b))$.

By Proposition \ref{prop.autprops}, for $\gamma,\gamma' \in \kk^\times$ and $\mu,\mu' \in D^\times$, we have
\begin{align}
\label{eq.omega-eta1}
\Omega \circ \eta_{\gamma',\mu'} \circ \Omega \circ \eta_{\gamma,\mu}
= \eta_{\gamma',(\gamma')^{i_0}(\mu')\inv} \circ \Omega \circ \Omega \circ \eta_{\gamma,\mu}
= \eta_{\gamma'\gamma,(\gamma')^{i_0}\gamma^{\deg\mu}(\mu')\inv\mu} \in K.
\end{align}
Observe that, since this composition has finite order, then $(\mu')\inv\mu \in \kk^\times$.

Suppose $K=\id$.
Applying \eqref{eq.omega-eta1} to $\gamma=\gamma'$ and $\mu=\mu'$ we have 
$(\Omega \circ \eta_{\gamma,\mu})^2 = \eta_{\gamma^2,\gamma^{i_0+\deg\mu}}=\id$,
whence $\gamma^2=\gamma^{i_0+\deg\mu}=1$, so $\gamma=\pm 1$ and $H \iso C_2$ or $H \iso C_4$.

Now suppose $H \neq K$, so $H$ contains some $\Omega \circ \eta_{\gamma,\mu}$. If $\Omega \circ \eta_{\gamma',\mu'} \in H$, then by \eqref{eq.omega-eta1}, we have $(\Omega \circ \eta_{\gamma',\mu'}) \circ (\Omega \circ \eta_{\gamma,\mu}) = \eta_{a,b}^\ell \in K$ for some $\ell \in \ZZ$.
Thus, $\Omega \circ \eta_{\gamma',\mu'} = (\Omega \circ \eta_{\gamma,\mu})\inv \circ \eta_{a,b}^\ell \in \grp{\eta_{a,b},\Omega \circ \eta_{\gamma,\mu}}$.
\end{proof}

\begin{remark}
Of course, the subgroups given in Proposition \ref{prop.subgrp} are also finite subgroups in the case that $a$ is symmetric. We conjecture that these constitute \emph{all} of the finite subgroups even when $a$ is symmetric.
\end{remark}

\section{An extension of a theorem of Jordan and Wells}
\label{sec.JW}

Let $R=D(\sigma,a)$ be a GWA. Set $\mu \in \kk^\times$ with $m=\ord(\mu)$ and define $\phi \in \Aut(R)$ by $\phi(d)=d$ for all $d\in D$, $\phi(x) = \mu\inv x$, and $\phi(y) = \mu y$.
By Jordan and Wells \cite[Theorem 2.6]{JW}, $R^\grp{\phi}$ = $D(\sigma^m, A)$ with $A=\prod^{m-1}_{i=0} \sigma^i(a)$. We want to study a more general version of their theorem in which we allow that $\phi|_D$ is not necessarily the identity.
First, we will need some further background on GWAs.

Setting $\deg(D)=0$, $\deg(x)=1$, and $\deg(y)=-1$ defines a $\ZZ$-grading on the GWA $R=D(\sigma,a)$. We denote by $R_k$ the $\kk$-vector space of degree $k$ homogeneous elements in $R$. 
Using the GWA relations, one has $R_0=D$, $R_k=x^kD$ for $k>0$, and $R_k=y^{-k}D$ for $k<0$.
Suppose $\phi \in \Aut(R)$ respects the $\ZZ$-grading on $R$. 
Then $\phi$ is a linear transformation on each $R_k$ and hence to determine $R^\grp{\phi}$ it suffices to determine the fixed space of each graded component of $R$. That is,
$(R^\grp{\phi})_k=(R_k)^\grp{\phi}$.
Moreover, the following identities in $R$ can be established by an easy induction argument,
\begin{align}
\label{eq.yx}
\displaystyle y^m x^m &= \sigma^{-(m-1)}(a) \cdots \sigma^{-2}(a) \sigma^{-1}(a)a=\prod^{m-1}_{i=0} \sigma^{-i}(a), \\
\label{eq.xy}
\displaystyle x^m y^m &= \sigma^{m}(a) \sigma^{m-1}(a) \cdots \sigma^2(a)\sigma(a)=\prod^{m}_{i=1} \sigma^{i}(a).
\end{align}

\begin{theorem}
\label{thm.gJW}
Let $D$ be an integral domain, $R=D(\sigma,a)$ a GWA, and $\mu \in \kk^\times$ with $m=\ord(\mu)<\infty$.
Suppose that $\phi \in \Aut(R)$ satisfies:
\begin{enumerate}
\item $\restr{\phi}{D}$ is an automorphism of $D$ with $n=\ord(\restr{\phi}{D})<\infty$;
\item $\phi(x) = \mu^{-1}x$ and $\phi(y) = \mu y$.
\end{enumerate}
If $\gcd(m,n)=1$, then $R^\grp{\phi}$ = $D^\grp{\phi}(\sigma^m, A)$ with $A=\prod^{m-1}_{i=0} \sigma^{-i}(a)$.
\end{theorem}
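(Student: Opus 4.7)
The plan is to exploit the $\ZZ$-grading on $R$ directly. Since $\phi(x) = \mu^{-1}x$, $\phi(y) = \mu y$, and $\phi(D) = D$, the automorphism $\phi$ preserves this grading, so computing $R^\grp{\phi}$ reduces to computing $(R_k)^\grp{\phi}$ for each $k \in \ZZ$. Two preparatory observations make the right-hand side $D^\grp{\phi}(\sigma^m, A)$ meaningful. First, applying $\phi$ to the relation $xd = \sigma(d)x$ and then cancelling the scalar $\mu^{-1}$ and the non-zero-divisor $x$ (using that $D$ is a domain) shows $\sigma\phi = \phi\sigma$, so $\sigma^m$ restricts to an automorphism of $D^\grp{\phi}$. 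Second, from $a = yx$ we get $\phi(a) = \mu y \cdot \mu^{-1}x = a$, and combined with the commutation this gives $\sigma^{-i}(a) \in D^\grp{\phi}$ for every $i$, so $A \in D^\grp{\phi}$.

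Next I would analyze the fixed space of each graded component. For $k > 0$, $R_k$ is free as a left $D$-module on $x^k$, and $\phi(x^k d) = \mu^{-k} x^k \phi(d)$, so $x^k d$ is fixed iff $\phi(d) = \mu^k d$. Applying $\phi^n = \id_D$ to a nonzero such $d$ yields $\mu^{kn} = 1$, i.e.\ $m \mid kn$, and the coprimality hypothesis $\gcd(m,n) = 1$ forces $m \mid k$. In that case $\mu^k = 1$ and the condition reduces to $d \in D^\grp{\phi}$. Hence $(R_k)^\grp{\phi} = 0$ unless $m \mid k$, while $(R_{lm})^\grp{\phi} = x^{lm} D^\grp{\phi}$ for $l > 0$; the symmetric argument on $R_k = y^{-k}D$ gives $(R_{-lm})^\grp{\phi} = y^{lm} D^\grp{\phi}$ for $l > 0$, and $(R_0)^\grp{\phi} = D^\grp{\phi}$ is immediate. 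This step is where I expect the main subtlety to lie: without the coprimality assumption, $\phi|_D$ could possess eigenvectors for eigenvalues $\mu^k$ with $m \nmid k$, and these would contribute fixed elements that spoil the GWA structure on the right-hand side.

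Finally I would assemble the isomorphism. Define $\Phi: D^\grp{\phi}(\sigma^m, A) \to R^\grp{\phi}$ by $d \mapsto d$ for $d \in D^\grp{\phi}$ and by sending the canonical generators $X, Y$ of the new GWA to $x^m, y^m$ respectively. The relations $Xd = \sigma^m(d)X$ and $Yd = \sigma^{-m}(d)Y$ follow by iterating the GWA relations in $R$; $YX = A$ is exactly \eqref{eq.yx}; and $XY = \sigma^m(A)$ follows from \eqref{eq.xy} after rewriting $\prod_{i=1}^{m}\sigma^i(a) = \sigma^m\bigl(\prod_{i=0}^{m-1}\sigma^{-i}(a)\bigr)$. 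So $\Phi$ is a well-defined algebra homomorphism, and its image lies in $R^\grp{\phi}$ since $\phi(x^m) = \mu^{-m}x^m = x^m$ and similarly $\phi(y^m) = y^m$. Comparing graded components against the calculation of $(R_k)^\grp{\phi}$ above shows that $\Phi$ sends the rank-one free $D^\grp{\phi}$-module $X^l D^\grp{\phi}$ (respectively $Y^l D^\grp{\phi}$) bijectively onto $x^{lm} D^\grp{\phi}$ (respectively $y^{lm} D^\grp{\phi}$), so $\Phi$ is the desired isomorphism.
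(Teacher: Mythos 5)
Your proof is correct and follows essentially the same route as the paper: both reduce to computing the fixed subspace of each graded component $R_k=x^kD$ (or $y^{-k}D$) and use $\gcd(m,n)=1$ to show it vanishes unless $m\mid k$, in which case it equals $x^kD^{\grp{\phi}}$. The only cosmetic differences are that you deduce $m\mid k$ directly from $\mu^{kn}=1$ where the paper invokes the orbit--stabilizer theorem, and you explicitly record the implicitly needed facts that $\phi$ commutes with $\sigma$ on $D$ and that $A\in D^{\grp{\phi}}$.
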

\begin{proof}
Assume $\gcd(n,m)=1$. We want to prove that the fixed elements form a GWA of the prescribed form. We have $\phi(y^m) = \mu^m y^m = y^m$ and $\phi(x^m) = \mu^{-m} x =  x^m$.
Set $Y=y^m$ and $X=x^m$.
It follows directly that $Xd=\sigma^m(d)X$ and $Yd=\sigma^{-m}(d)Y$ for all $d \in D^\grp{\phi}$. By \eqref{eq.yx} and \eqref{eq.xy}, $YX=A$ and $XY=\sigma^m (A)$. Hence, $X,Y$, and $D^\grp{\phi}$ generate the GWA $D^\grp{\phi}(\sigma^m, A)$. It remains to show the these elements generate the fixed ring. 

It is clear that $R_0^\grp{\phi}=D^\grp{\phi}$. Let $k \in \NN$, $k>0$, and $p\in D$. 
We denote by $\orb(p)$ and $\stb(p)$ the orbit and stabilizer, respectively, of $p$ under the action of $\restr{\phi}{D}$.
Assume $x^k p$ is fixed by $\phi$. Then $x^k p = \phi(x^k p) = \mu^{-k}x^k \phi(p)$. Because $R$ is a domain, then $\phi(p) = \mu^{k} p$. We then have $\phi^m(p)=(\mu^k)^m p =(\mu^m)^k p  = p$. Hence, $\orb(p)$ divides $m$.
According to Orbit-Stabilizer theorem, $\orb(p)\cdot\stb(p) = \ord\left( \restr{\phi}{D} \right)=n$. Then $\orb(p)$ divides $n$ and this contradicts $\gcd(m,n) = 1$ unless $m=n=1$, in which case $\phi=\id_R$.
It follows that $R_k^\grp{\phi}=\{0\}$ if $k$ does not divide $m$ and otherwise $R_{mk}=x^{mk}D^\grp{\phi}$. The proof is similar for $k<0$.
\end{proof}

\begin{example}
Let $D=\kk[h_1,h_2]$ and $R=D(\sigma,a)$ with $\sigma(h_1)=h_2$, $\sigma(h_2)=h_1$, and $a=h_1h_2$.
Define an automorphism $\eta:R \to R$ by $\eta(h_i)=-h_i$, $\eta(y)=\omega y$, $\eta(x)=\omega\inv x$ for $\omega$ a primitive third root of unity.
By Theorem \ref{thm.gJW}, $R^\grp{\eta}$ is again a GWA. In this case, $D^\grp{\eta}=\kk[h_1^2,h_2^2,h_1h_2]/(h_1^2h_2^2-(h_1h_2)^2)$ and $A=(h_1h_2)^3$.
\end{example}

We now consider two cases for fixed rings of quantum GWAs. The first corresponds to case (1) of Proposition \ref{prop.subgrp} and the second corresponds to case (2).

\subsection{The diagonal case}

Let $R=D(\sigma,a)$ be a quantum GWA with $a$ not symmetric. Suppose $\phi \in \Aut(R)$ acts diagonally on the algebra generating set $\{x,y,h\}$ of $R$ and $\ord(\phi)< \infty$.
By Proposition \ref{prop.subgrp}, there exists $\gamma,\mu \in \kk^\times$ with $n=\ord(\gamma)<\infty$ and $m=\ord(\mu)<\infty$ so that $\phi=\eta_{\gamma,\mu}$. That is, $\phi$ acts according to \eqref{eq.eta}. If we further assume that $n \mid i_0$, then the action is given simply by
\[ \phi(h)=\gamma h, \quad \phi(y) = \mu y, \quad \phi(x) = \mu\inv.\]
With these hypotheses the the fixed ring is easily computed using Thereom \ref{thm.gJW}.

\begin{corollary}
\label{cor.fixGWA1}
Keep the above notation and hypotheses. If $\gcd(m,n)=1$, then $R^\grp{\phi}$ again a quantum GWA.
\end{corollary}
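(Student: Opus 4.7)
The plan is to apply Theorem \ref{thm.gJW} directly, after verifying its hypotheses, and then to repackage the conclusion in the quantum-GWA format.

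First I would check the two hypotheses of Theorem \ref{thm.gJW}. The restriction $\restr{\phi}{D}$ is the scaling $h\mapsto\gamma h$, which is a $\kk$-algebra automorphism of $D$ of order exactly $n=\ord(\gamma)$, giving condition (1). For condition (2), the formula for $\eta_{\gamma,\mu}$ in \eqref{eq.eta} says $\phi(x)=\mu^{-1}\gamma^{i_0}x$; the hypothesis $n\mid i_0$ forces $\gamma^{i_0}=1$, so $\phi(x)=\mu^{-1}x$ and $\phi(y)=\mu y$, as required. With $\gcd(m,n)=1$, Theorem \ref{thm.gJW} then gives
\[ R^{\grp{\phi}}=D^{\grp{\phi}}(\sigma^m,A), \qquad A=\prod_{i=0}^{m-1}\sigma^{-i}(a). \]

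Next I would identify the right-hand side as a quantum GWA in the variable $H:=h^n$. Because $\restr{\phi}{D}$ scales $h$ by a primitive $n$th root of unity, $D^{\grp{\phi}}=\kk[H]$ when $D=\poly$ and $D^{\grp{\phi}}=\kk[H^{\pm 1}]$ when $D=\lnt$. The induced automorphism descends via $\sigma^m(H)=(q^m h)^n=q^{mn}H$, i.e.\ it is again a scaling automorphism of $D^{\grp{\phi}}$, now with parameter $q^{mn}$. It remains to check that $A\in D^{\grp{\phi}}$. For this, recall $\gamma\in C_g$ (so $\gamma^g=1$) and the support $I$ of $a$ satisfies $g\mid j-i_0$ for every $j\in I$; combined with $n\mid i_0$ this yields $\gamma^j=\gamma^{i_0}=1$ for all $j\in I$. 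Consequently $\phi(a)=a$, and the same calculation applied to $\sigma^{-i}(a)=\sum a_j q^{-ij}h^j$ shows each $\sigma^{-i}(a)$ is $\phi$-fixed. Thus $A$ lies in $D^{\grp{\phi}}$ and may be rewritten as a (Laurent) polynomial $\tilde A(H)$, so $R^{\grp{\phi}}=D^{\grp{\phi}}(\tilde\sigma,\tilde A)$ is manifestly a quantum GWA in the variable $H$.

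The main potential obstacle is the degenerate case $q^{mn}=1$, which would disqualify the output from being a \emph{quantum} GWA under the strict reading of the definition; away from that corner the argument is a direct translation. All of the substantive content is carried by Theorem \ref{thm.gJW} and by the simplification $\gamma^{i_0}=1$ that the hypothesis $n\mid i_0$ provides.
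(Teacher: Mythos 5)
Your proposal is correct and follows essentially the same route as the paper: verify the hypotheses of Theorem \ref{thm.gJW} (using $n\mid i_0$ to reduce $\phi(x)$ to $\mu^{-1}x$), apply it, and then identify $D^{\grp{\phi}}$ with $\kk[H]$ or $\kk[H^{\pm 1}]$ for $H=h^n$ with parameter $q'=q^{mn}$ and defining polynomial $A$, exactly as in the discussion following the corollary. Your closing caveat about the degenerate case $q^{mn}=1$ is a fair observation that the paper does not address, but it does not affect the agreement of the two arguments.
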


In the setting of Corollary \ref{cor.fixGWA1} with $\eta=\eta_{\gamma,\mu}$,
$R^\grp{\phi}$ is generated by $Y=y^m$, $X=x^m$, and $H=h^n$ ($H^{\pm 1}=h^{\pm n}$ in the case $D=\lnt$). The defining polynomial is $A=\prod^{m-1}_{i=0} \sigma^{-i}(a) = \prod^{m-1}_{i=0} a(q^{-i}h)$ and $\sigma'(H)=q' H$, where $q'=q^{mn}$. 
Hence, if $D=\poly$, then $R^\grp{\phi}$ is the quantum GWA on $X,Y,H$ with relations
\[ XH=q'HX, \quad YH=(q')\inv HY, \quad
YX=A, \quad XY=\sigma'(A).\]
This extends readily to the case $D=\lnt$.

We conjecture that the converse to Corollary \ref{cor.fixGWA1} holds as well,
as the next example illustrates.

\begin{example}
Again keep the above notation and hypotheses but set $n=6$ and $m=4$.
Then $h^6$, $y^4$, and $x^4$ all belong to $R^\grp{\phi}$, but $y^2h^3$ and $x^2h^3$ are also fixed by $\phi$. Thus, $R^\grp{\phi}$ appears to be generated by more than three elements but every quantum GWA is minimally generated (as an algebra) by two or three elements. 
\end{example}

\subsection{The non-diagonal case}
Let $R=D(\sigma,a)$ be a quantum GWA with $a$ not symmetric and $q=-1$.
Suppose $\phi \in \Aut(R)$ does not act diagonally on the generating set $\{x,y,h\}$.
By Proposition \ref{prop.subgrp}, there exists $\gamma=\pm 1$ and $\mu \in D^\times$ such that 
$\phi=\Omega \circ \eta_{\gamma,\mu}$
For the convenience of the reader, we compute explicitly the image of each generator of $R$ under the map $\phi$:
\begin{align*}
\phi(h) &= \Omega(\eta_{\gamma,\mu}(h))
    = \Omega(\gamma h)
    = -\gamma h \\
\phi(y) &= \Omega(\eta_{\gamma,\mu}(y))
    = \Omega(y \mu)
    = x (-\gamma)^{\deg\mu} \mu 
    = \gamma^{\deg\mu} \mu x \\
\phi(x) &= \Omega(\eta_{\gamma,\mu}(x))
    = \Omega(\gamma^{i_0} \mu\inv x)
    = \gamma^{i_0} (-\gamma)^{-\deg\mu} \mu\inv y
    = \gamma^{i_0-\deg\mu} y\mu\inv.
\end{align*}
We now restrict ourselves to the case $\mu \in \kk^\times$, as well our previous hypothesis that $i_0 \mid \ord(\gamma)$. The above equations now reduce to
\[ \phi(h)=-\gamma h, \quad \phi(y)=\mu x, \quad \phi(x)=\mu\inv y.\]

The next lemma is clear.

\begin{lemma}
\label{lem.diag}
Keep the above notation and hypotheses. Set $X=(\mu x + y)$, $Y=(\mu x - y)$, and $H=h$.
Then $\{X,Y,H\}$ (resp. $\{X,Y,H^{\pm 1}\}$) is an algebra generating set for $R$ when $D=\poly$ (resp. $D=\lnt$) and $\phi$ acts diagonally on this generating set. That is, $\phi(X) = X$, $\phi(Y)=-Y$, and $\phi(H)=-\gamma H$.
\end{lemma}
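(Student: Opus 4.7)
The plan is to verify the two assertions — the diagonal action and the generating property — by direct calculation; this is why the authors can dispense with a proof. The computation splits into two independent steps.

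First I would check the action of $\phi$. Under the hypotheses $\mu \in \kk^\times$ and $\ord(\gamma) \mid i_0$, the formulas displayed just above the lemma simplify to $\phi(x) = \mu\inv y$, $\phi(y) = \mu x$, and $\phi(h) = -\gamma h$. Substituting these into the definitions of $X$, $Y$, and $H$ yields $\phi(X) = \mu \cdot \mu\inv y + \mu x = X$, $\phi(Y) = \mu \cdot \mu\inv y - \mu x = -(\mu x - y) = -Y$, and $\phi(H) = -\gamma H$, as claimed.

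For the generating claim, I would observe that $X + Y = 2\mu x$ and $X - Y = 2y$. Because the setup requires $q = -1$ and the definition of a quantum GWA demands $q \neq 1$, the characteristic of $\kk$ is not $2$, so $2$ is invertible. Thus $x = (2\mu)\inv(X+Y)$ and $y = 2\inv(X-Y)$ both lie in the $\kk$-subalgebra generated by $\{X,Y\}$, while $h = H$ is already among the generators. Hence $\{X,Y,H\}$ generates $R$ in the case $D = \poly$; an identical argument with $H\inv = h\inv$ adjoined handles $D = \lnt$.

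There is no real obstacle here — the only subtle point is that invertibility of $2$ in $\kk$ is forced by the standing hypothesis $q = -1 \neq 1$, without which the change of variables $X, Y$ would collapse.
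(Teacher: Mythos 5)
Your verification is correct and is exactly the argument the paper has in mind: the paper states the lemma without proof (``The next lemma is clear''), and the intended justification is precisely the direct substitution of $\phi(x)=\mu\inv y$, $\phi(y)=\mu x$, $\phi(h)=-\gamma h$ into $X,Y,H$ together with the inversion $x=(2\mu)\inv(X+Y)$, $y=2\inv(X-Y)$. Your observation that $q=-1\neq 1$ forces $\operatorname{char}\kk\neq 2$, so that $2$ is invertible, is a worthwhile point the paper leaves implicit.
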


The elements $X,Y,H$ from Lemma \ref{lem.diag} satisfy the commutation relations $0=HX + XH = HY + YH$. A computation shows
\begin{align}
\label{eq.xy1}
YX - XY
    = 2\mu (xy - yx) = 2\mu (a(-h)-a(h)) = A(H) \in \kk[H].
\end{align}
Finally, we have by a similar computation
\begin{align}
\label{eq.xy2}
X^2-Y^2=2\mu (a(h)+a(-h)) = B(H) \in \kk[H].
\end{align}
While $R$ does not necessarily have GWA structure under the generating set from Lemma \ref{lem.diag},
we can easily determine that $\{ X^iY^\varepsilon H^j : i,j \in \NN, \varepsilon \in \{0,1\} \}$ is a $\kk$-basis for $R$ when $D=\poly$. When $D=\lnt$, we allow $j \in \ZZ$.
Since $X$ is fixed, to determine a basis of $R^\grp{\phi}$ it suffices to determine those monomials $Y^iH^j$ that are fixed by $\phi$.
As $X$ and $Y^2$ are fixed by $\phi$ (for any choice of $\gamma$), then \eqref{eq.xy2} implies that $B(H)$ is also fixed by $\phi$. However, $\phi$ respects the $\NN$-grading on $\kk[h]$. 
Hence, $B(H) \in D^\grp{\phi}$.
Similarly, note that $\phi(A(H))=-A(H)$ where $A(H)$ is the polynomial appearing in \eqref{eq.xy1}.

We now consider the two cases determined by $\gamma=\pm 1$.

\begin{proposition}
\label{prop.omega}
Keep the above notation and assume $\gamma=-1$.
\begin{itemize}
\item If $D=\poly$, then $R^\grp{\phi} \iso \kk_{-1}[u,v] = \kk\langle u,v : uv+vu\rangle$.
\item If $D=\lnt$, then $R^\grp{\phi} \iso \kk_{-1}[u,v^{\pm 1}] = \kk\langle u,v^{\pm 1} : uv+vu\rangle$.
\end{itemize}
\end{proposition}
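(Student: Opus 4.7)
The plan is to use Lemma \ref{lem.diag} to convert $\phi$ into a simple sign action on the generating set $\{X, Y, H\}$ of $R$, then to read off the fixed ring directly from the PBW-type $\kk$-basis of $R$ recorded just before the statement, and finally to recognize the result as a skew polynomial (or skew Laurent) ring via the anticommutation $XH + HX = 0$ also supplied in the excerpt.

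First I specialize $\gamma = -1$ in Lemma \ref{lem.diag}: this gives $\phi(X) = X$, $\phi(Y) = -Y$, and $\phi(H) = -\gamma H = H$, so $\phi$ is the sign automorphism on $Y$ and fixes $X$ and $H$ pointwise. Applying this to the $\kk$-basis $\{X^i Y^\varepsilon H^j : i,j \in \NN,\ \varepsilon \in \{0,1\}\}$ of $R$ (with $j \in \ZZ$ in the case $D = \lnt$), each basis vector is an eigenvector of $\phi$ with eigenvalue $(-1)^\varepsilon$. The $+1$-eigenspace, which is exactly $R^\grp{\phi}$, therefore has $\kk$-basis $\{X^i H^j\}$, with $j \in \NN$ in the polynomial case and $j \in \ZZ$ in the Laurent case.

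To close the proof I build the isomorphism. The anticommutation $HX + XH = 0$ shows that the assignment $u \mapsto X$, $v \mapsto H$ extends to a well-defined algebra homomorphism $\psi \colon \kk_{-1}[u,v] \to R^\grp{\phi}$ in the case $D = \poly$, and to $\psi \colon \kk_{-1}[u, v^{\pm 1}] \to R^\grp{\phi}$ in the case $D = \lnt$, using that $H = h$ is invertible there. Since $\kk_{-1}[u,v]$ has $\kk$-basis $\{u^i v^j\}$ and $\psi$ sends this bijectively onto the $\kk$-basis $\{X^i H^j\}$ of $R^\grp{\phi}$ obtained in the previous paragraph, $\psi$ is a $\kk$-linear bijection, hence an isomorphism of $\kk$-algebras.

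Given Lemma \ref{lem.diag} and the PBW basis of $R$, the argument is essentially basis bookkeeping and there is no substantial obstacle. The one step that needs genuine verification is the well-definedness of $\psi$, which amounts to the single relation $HX + XH = 0$ recorded in the excerpt; every other ingredient is already in place.
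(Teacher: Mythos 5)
Your argument is correct and follows essentially the same route as the paper: both rest on Lemma \ref{lem.diag} and the $\kk$-basis $\{X^iY^\varepsilon H^j\}$, concluding that the fixed ring is $\kk_{-1}[X,H]$. The only cosmetic difference is that the paper first lists $Y^2$ as a generator and then discards it via \eqref{eq.xy2}, whereas your eigenspace bookkeeping on the basis with $\varepsilon\in\{0,1\}$ never needs $Y^2$ at all; this makes the linear-independence step explicit, which the paper leaves implicit.
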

\begin{proof}
We prove this result for the case $D=\poly$. The case $D=\lnt$ is similar.

When $\gamma=-1$, it follows easily that the fixed ring is generated by $X$, $H$, and $Y^2$. 
However, by \eqref{eq.xy2}, $Y^2$ is a linear combination of $X^2$ and $H$. Hence, there is an isomorphism $R^\grp{\phi} \iso \kk_{-1}[X,H]$.
\end{proof}

If $R=D(\sigma,a)$ and $\phi=\Omega \circ \eta_{1,\mu} \in \Aut(R)$,
then the resulting fixed ring is less recognizable but still possible to compute.

\begin{proposition}
Keep the above notation and assume $\gamma=1$.
\begin{itemize}
\item If $D=\poly$, then $R^\grp{\phi}$ is generated by $X$, $Y^2$, $YH$, and $H^2$.
\item If $D=\lnt$, then $R^\grp{\phi}$ is generated by $X$, $Y^2$, $YH^{\pm 1}$, and $H^{\pm 2}$.
\end{itemize}
In both cases, the relations \eqref{eq.xy1} and \eqref{eq.xy2} hold, along with the relations
\[ 0 = Y(YH)+(YH)Y = Y^2X-XY^2 = X(YH) + (YH)X+A(H)H \quad\text{and}\quad H^2 \text{ central}.\]
\end{proposition}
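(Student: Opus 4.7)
The plan is to use the $\kk$-basis $\{X^i Y^\varepsilon H^j\}$ of $R$ recorded just before the proposition (with $i\in\NN$, $\varepsilon\in\{0,1\}$, and $j\in\NN$, or $j\in\ZZ$ in the Laurent case). The key observation is that $\phi$ acts diagonally on this basis: since $\phi(X)=X$, $\phi(Y)=-Y$, and $\phi(H)=-H$ (the last because $\gamma=1$ forces $\phi(h)=-h$),
\[ \phi(X^i Y^\varepsilon H^j) = (-1)^{\varepsilon+j}\, X^i Y^\varepsilon H^j. \]
Hence $R^{\grp{\phi}}$ has $\kk$-basis consisting of exactly those monomials with $\varepsilon+j$ even.

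The generation step is then immediate. Each such monomial with $\varepsilon=0$ and $j=2k$ equals $X^i(H^2)^k$, and each with $\varepsilon=1$ and $j=2k+1$ equals $X^i(YH)(H^2)^k$; both lie in the subalgebra generated by $\{X, YH, H^2\}$, where we use that $X$ commutes with $H^2$ since it anticommutes with $H$. In the Laurent setting one also needs $H^{-2}$, and then $YH^{-1}=(YH)(H^{-2})$ is available. The element $Y^2$ is listed for emphasis only: by \eqref{eq.xy2} one has $Y^2=X^2-B(H)$, and $B(H)\in\kk[H^2]$ since only even powers of $H$ occur in $a(h)+a(-h)$.

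For the relations I would proceed by direct calculation. Centrality of $H^2$ and $Y(YH)+(YH)Y=0$ both follow from the anticommutations $HX+XH=0=HY+YH$; for instance $(YH)Y=-Y^2H$ and $Y(YH)=Y^2H$ cancel. For $Y^2X=XY^2$, apply \eqref{eq.xy1} twice and use that $A(H)$ is odd in $H$, so $YA(H)=A(-H)Y=-A(H)Y$, which produces the required cancellation. Finally, $X(YH)+(YH)X=(XY-YX)H=-A(H)H$ by \eqref{eq.xy1}, which rearranges to the stated relation.

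The main subtlety is the parity bookkeeping implicit in \eqref{eq.xy1}--\eqref{eq.xy2}: $A(H)$ contains only odd powers of $H$ while $B(H)$ contains only even powers, so $A(H)H$ and $B(H)$ both lie in $\kk[H^2]$ and the listed relations are genuinely relations among the four generators. Once this parity observation is made, no step poses a real obstacle; everything reduces to careful sign tracking.
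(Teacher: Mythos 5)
Your proof is correct and takes essentially the same approach as the paper: generation follows from the diagonal action of $\phi$ on the monomial basis $\{X^iY^\varepsilon H^j\}$ (which the paper leaves implicit after establishing that basis), and the relations follow from the anticommutation of $H$ with $X$ and $Y$ together with \eqref{eq.xy1} and \eqref{eq.xy2} and the parity observation that $A(H)$ is odd and $B(H)$ is even in $H$. The only cosmetic difference is that you verify $Y^2X=XY^2$ by applying \eqref{eq.xy1} twice and using $YA(H)=-A(H)Y$, whereas the paper expands in $x,y$ and uses $x^2y=yx^2$; both are valid, and your remark that $Y^2$ is a redundant generator is consistent with the statement.
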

\begin{proof}
It remains only to check the relations $Y^2X=XY^2$ and $X(YH) + (YH)X+A(H)H=0$.
We have
\[ Y^2X-XY^2=2\mu \left( \mu ( x^2y-yx^2) + (y^2x-xy^2) \right) = 0.\]
The last equality follows since $x^2y = x(xy)=x a(-h) = a(h) x = (yx)x=yx^2$ and similarly for the other term. 
From \eqref{eq.xy1} and \eqref{eq.xy2} we have
\[ X(YH) + (YH)X = (XY-YX)H = -A(H)H.\]
Clearly, $B(H) \in \kk[H^2]$.
Note that $A(H)$ necessarily consists of nonzero summands with odd degree, whence $A(H)H \in \kk[H^2]$.
\end{proof}

\section{Properties of the fixed rings}
\label{sec.props}

In this section we examine important ring theoretic properties of fixed rings of quantum GWAs including global dimension, the Calabi-Yau property, and simplicity. These properties rely by and large on the roots of the defining polynomial and this is where we begin our analysis.

We must take special care both when $q$ is a root of unity and when $0$ is a root of the defining polynomial $a$. However, we recall that by \cite[Theorem A]{SAV}, $\lnt(\sigma,a) \iso \lnt(\sigma,h^{-k}a)$ for any integer $k$. Hence, we need only consider $0$ as a root of $a$ in the case $D=\kk[h]$.

We first recall our notation from above. Let $R=D(\sigma,a)$ be a quantum GWA with $a$ not a unit. Write $a = \Sigma_{i \in I} a_i h^i$ where $a_i \in \kk$ and $I$ is a finite set consisting of those $i \in \ZZ$ such that $a_i \neq 0$. Recall from Remark \ref{rmk.indep} that the definition of $\eta_{\gamma,\mu}$ is independent of the choice of $i_0$.

\begin{lemma}
\label{lem.fixpoly}
Let $R=D(\sigma,a)$ be a quantum GWA and $\eta_{\gamma,\mu} \in \Aut(R)$. If $\ord(\gamma) \mid i_0$, then $a(h) \in D^\grp{\eta_{\gamma,\mu}}$.

\end{lemma}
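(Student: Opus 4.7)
The plan is to unwind the definitions and verify directly that $\eta_{\gamma,\mu}(a(h)) = a(h)$. Since $\eta_{\gamma,\mu}$ acts on $D$ only through $h \mapsto \gamma h$, we have
\[
\eta_{\gamma,\mu}(a(h)) \;=\; a(\gamma h) \;=\; \sum_{i \in I} a_i \gamma^{i} h^{i},
\]
so the statement reduces to showing $\gamma^{i} = 1$ for every $i \in I$.

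The key observation is that each exponent $i \in I$ can be written as $i = i_0 + (i - i_0)$, where by the definition of $g = \gcd\{\,i-j : a_i a_j \neq 0\,\}$ the difference $i - i_0$ is a multiple of $g$. The hypothesis that $\ord(\gamma) \mid i_0$ gives $\gamma^{i_0} = 1$, while the containment $\gamma \in C_g$ (built into the definition of $\eta_{\gamma,\mu}$) gives $\gamma^{g} = 1$. Combining these two facts yields $\gamma^{i} = \gamma^{i_0}\gamma^{i - i_0} = 1$ for every $i \in I$, which is exactly what we need.

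I expect no real obstacle here: the proof is a two-line manipulation, and the only subtlety is remembering the monomial case $D = \poly$ with $a = a_{i_0} h^{i_0}$ (where $C_g = \kk^\times$ by convention). In that situation $I = \{i_0\}$ and $\gamma^{i_0} = 1$ directly makes $a(\gamma h) = a(h)$, so the argument goes through without modification. Thus no case analysis is needed beyond noting that the hypothesis on $\ord(\gamma)$ is genuinely used only through $\gamma^{i_0} = 1$, with the rest handled by $\gamma \in C_g$.
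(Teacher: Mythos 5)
Your argument is correct and is essentially the paper's own proof: both reduce the claim to showing $\gamma^{i}=1$ for all $i\in I$ by combining $\gamma^{i_0}=1$ (from $\ord(\gamma)\mid i_0$) with $\gamma^{g}=1$ (from $\gamma\in C_g$) applied to the difference $i-i_0$, with the monomial case dispatched separately. No gaps.
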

\begin{proof}
Set $\ord(\gamma)=n$ and assume $n \mid i_0$. If $a(h)$ is a monomial, then the result is clear. Assume $a$ is not a monomial. Since $\gamma \in C_g$, then $n\mid g$. 
Since $\{i_0-j: a_j \neq 0\} \subset \{i-j: a_i a_j \neq 0\}$, then $n \mid (i_0-j)$ for every $j$ with $a_j\neq 0$. By assumption, we have $n \mid i_0$, so $n \mid j$. Hence, the conclusion holds.
\end{proof}

Throughout, we will work under the following hypotheses/notations which are inspired by the previous section. For $a(h) \in \kk[h]$ or $\lnt$, we denote by $N_a$ the number of (not necessarily distinct) roots of $a(h)$. Of course, in the case of $D=\kk[h]$, this is equivalent to the degree of $a(h)$.

\begin{hypothesis}
\label{hyp.gwa}
Let $R=D(\sigma,a)$ be a quantum GWA. Write 
\begin{align}
\label{eq.ah}
a(h) = (h-c_1)(h-c_2)(h-c_3) \dots (h-c_{N_a}).
\end{align}
Let $\gamma,\mu \in \kk^\times$ with $\ord(\gamma)= n$ and $\ord(\mu)= m$ satisfying $n,m < \infty$, $n \mid i_0$, and $\gcd(n,m)=1$. Set $\eta=\eta_{\gamma,\mu} \in \Aut(R)$.
By Lemma \ref{lem.fixpoly} and the discussion above, $a(h) \in \kk[h^n]$, so we write 
\begin{align}
\label{eq.bh}
b(h^n) = (h^n -d_1) (h^n -d_2)(h^n-d_3) \cdots (h^n-d_{N_b}),
\quad N_b=N_a/n.
\end{align}
Since we regard $b(h^n)$ as a polynomial in $h^n$, then we refer to $d_1,\hdots,d_{N_b}$ as the roots of $b$.
\end{hypothesis}

If $R$ and $\eta$ satisfy Hypothesis \ref{hyp.gwa}, then by Corollary \ref{cor.fixGWA1}, $R^{\grp{\eta}}$ again has quantum GWA structure. If we set $H=h^n$, then the defining polynomial for the fixed ring in the notation above is
\[ A(H)=\prod^{m-1}_{i=0} \sigma^{-i}(a(h))=\prod_{i=0}^{m-1} \sigma^{-i}(b(h^n)).\]

Two roots of $a(h)$, say $c_i$ and $c_j$, 
are said to be \emph {congruent} if $c_j=q^k c_i$ for some $k \in \ZZ$. To avoid trivialities, we will assume $i \neq j$ when discussing congruent roots, so that a root is not congruent to itself. However, as is standard, we consider multiple roots to be congruent.
We caution the reader that congruence is dependent on the parameter $q$, equivalently the map $\sigma$. That is, congruence in $R$ is different from congruence in $R^{\grp{\eta}}$. 

Below we study the relationship between congruent/multiple roots in $a(h)$ and in $A(H)$. The polynomial $b(h^n)$ will be used as an intermediate. Our analysis of the roots is covered by three lemmas. In Lemma \ref{lem.roots1} we consider the case of $q$ a nonroot of unity and $a(0) \neq 0$. Lemma \ref{lem.rootofunity} handles the root of unity case, also with $a(0) \neq 0$. Finally, the case in which $0$ is a root of $a(h)$ is explained in Lemma \ref{lem.root0}.

\begin{lemma}
\label{lem.roots1}
Let $R$ and $\eta$ satisfy Hypothesis \ref{hyp.gwa}. Suppose that $q$ is not a root of unity and $a(0) \neq 0$. 
\begin{enumerate}
\item The polynomial $a(h)$ has congruent (resp. multiple) roots if and only if $b(h^n)$ has congruent (resp. multiple) roots. 
\item The polynomial $A(H)$ has congruent roots if and only if $a(h)$ has congruent roots.
\item If $A(H)$ has multiple roots, then $a(h)$ has multiple or congruent roots.
\item If $a(h)$ has multiple roots, then $A(H)$ has multiple roots. Moreover, if $a(h)$ has congruent roots but no multiple roots, then $A(H)$ has multiple roots if and only if there exist roots $c_i$ and $c_j$ of $a(h)$ such that $c_i=q^k c_j$ for some $k$ with $0 < k \leq m-1$.
\end{enumerate}
\end{lemma}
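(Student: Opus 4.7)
The plan is to pass to an algebraic closure, read off the roots of $a$, $b$, and $A$ explicitly from the factorizations, and use the hypothesis that $q$ is not a root of unity (together with $a(0) \neq 0$) to rule out accidental collisions among these roots.

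First I would establish the combinatorial setup. Since $a \in \kk[h^n]$ by Lemma \ref{lem.fixpoly} and $a(0) \neq 0$, each root $d_j$ of $b$ is nonzero. Fixing a primitive $n$-th root of unity $\zeta$ and an $n$-th root $c^{(j)}$ of each $d_j$, the roots of $a$, counted with multiplicity, are $\zeta^l c^{(j)}$ for $0 \leq l \leq n-1$ and $1 \leq j \leq N_b$. Expanding $A(H) = \prod_{i=0}^{m-1} b(q^{-in}H)$, the roots of $A$ are $q^{in} d_j$ for $0 \leq i \leq m-1$ and $1 \leq j \leq N_b$. Throughout, congruence is interpreted for $a$ via $q$, for $b$ via $q^n$, and for $A$ via $q^{mn}$ (the defining parameter of $R^{\grp{\eta}}$ by Corollary \ref{cor.fixGWA1}).

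For part (1), the multiplicity of $c$ in $a$ equals that of $c^n$ in $b$: writing $b(H) = (H-c^n)^s g(H)$ with $g(c^n) \neq 0$ gives $a(h) = \prod_l (h - \zeta^l c)^s g(h^n)$, and evaluating $g(h^n)$ at $h = \zeta^l c$ yields $g(c^n) \neq 0$, so each $(h - \zeta^l c)$ appears with multiplicity exactly $s$. For congruence, if $c_i \neq c_j$ satisfy $c_i = q^k c_j$, then $c_i^n = q^{kn} c_j^n$ are distinct, since $c_i^n = c_j^n$ would force $q^{kn} = 1$ and hence $k = 0$. The converse follows by lifting a congruence $d_i = q^{kn} d_j$ to a pair of $n$-th roots whose ratio is exactly $q^k$, using that the full fiber $\{\zeta^l c^{(j)}\}$ lies in the root set.

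Parts (2), (3), and (4) then proceed by the same analysis applied to the root list for $A$. A relation $q^{in} d_j = q^{smn} q^{i'n} d_{j'}$ rearranges to $d_j = q^{n(sm + i' - i)} d_{j'}$; when the two roots of $A$ are distinct the non-root-of-unity hypothesis forces $j \neq j'$. For (2), every integer $\alpha$ is realizable as $sm + i' - i$ with $0 \leq i, i' \leq m-1$, so $A$ has distinct congruent roots iff $b$ does, iff (by part (1)) $a$ does. For (3), a multiple root of $A$ gives $d_j = q^{n(i'-i)} d_{j'}$ with $j \neq j'$, and either $d_j = d_{j'}$ (multiple root of $b$, hence of $a$) or $d_j \neq d_{j'}$ (congruent roots of $a$). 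The first claim of (4) is immediate: a multiple root of $a$ yields $d_j = d_{j'}$ with $j \neq j'$ by part (1), which survives as $q^0 d_j = q^0 d_{j'}$ in $A$. For the sharper statement, under the no-multiple-roots assumption on $a$, $b$ also has no multiple roots by part (1), so the only way $A$ can have a multiple root is via $d_j = q^{n(i'-i)} d_{j'}$ with $d_j \neq d_{j'}$ and $i \neq i'$; the exponent $i'-i$ then lies in $\{\pm 1, \ldots, \pm(m-1)\}$, and lifting back to $a$ and relabelling if necessary yields congruent roots $c_i = q^k c_j$ with $0 < k \leq m-1$.

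I anticipate the main obstacle will be organizational rather than deep: the notion of congruence is different for each of $a$, $b$, and $A$, and one must keep track of when a relation of the form $d_j = q^{n\alpha} d_{j'}$ reflects a true congruence in $a$ versus a mere collapse of the fiber structure over $b$. Carefully invoking the non-root-of-unity hypothesis at each such juncture is the key technical point.
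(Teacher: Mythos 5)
Your proposal is correct and follows essentially the same route as the paper's proof: both arguments expand $A(H)=\prod_{i=0}^{m-1}\sigma^{-i}(b(h^n))$ to read off its roots as $q^{in}d_j$, relate roots of $a$ to roots of $b$ via the $n$-th power map (using $a(0)\neq 0$ so the fibers are honest), and invoke the non-root-of-unity hypothesis to rule out accidental coincidences among the $q^{in}d_j$. Your bookkeeping of the three different congruence parameters ($q$, $q^n$, $q^{mn}$) is, if anything, slightly more explicit than the paper's, but the substance is identical.
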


\begin{proof} 
(1) First note that for all $j$, we have
\begin{align}
\sigma(h^n-d_j)=q^n h^n -d_j=q^n(h^n-q^{-n}d_j).
\end{align}
Then, $b(h^n)$ has congruent roots $d_i$ and $d_j$, $i \neq j$, if and only if $d_j = q^{ln}d_i$ for some $l \in \ZZ$. 

If $d_1$ is a root of $b(h^n)$ of multiplicity at least two, then every root of $(h^n-d_1)$ is a multiple root of $a(h)$.
Now assume $c_1$ is a root of $a(h)$ with multiplicity at least two. Then, $(h-c_1)^2 \mid a(h)$ so there exists a root $d_1$ of $b(h^n)$ such that $(h-c_1) \mid (h^n-d_1)$. But $c_1$ is not a root of $(h^n-d_1)/(h-c_1)$ so $d_1$ is a root of multiplicity at least two of $b(h^n)$.

If $b(h^n)$ has congruent (non-multiple) roots $d_1$ and $d_2$ such that $d_1 = q^{kn} d_2$ for some nonzero $k \in \ZZ$, then there exist roots $c_1$ and $c_2$ of $(h^n-d_1)$ and $(h^n-d_2)$, respectively, such that $c_1=q^k c_2$. Hence, $a(h)$ has congruent (non-multiple) roots $c_1$ and $c_2$. 
On the other hand, if $c_1$ and $c_2$ are congruent (non-multiple) roots of $a(h)$, then $c_1 = q^k c_2$ for some nonzero $k \in \ZZ$.  Then $b(h^n)$ has roots $d_1=c_1^n$ and $d_2=c_2^n$ with $d_1= q^{kn} d_2$. Thus $b(h^n)$ has congruent (non-multiple) roots $d_1$ and $d_2$.

(2) By \eqref{eq.bh},
\begin{align*}
A(H) 
    &= \prod_{i=0}^{m-1} \sigma^{-i}(b(h^n))
    = \prod_{i=0}^{m-1} \sigma^{-i}((h^n -d_1)(h^n -d_2) \cdots (h^n-d_{N_b})) \\
    &= \prod_{i=0}^{m-1} \sigma^{-i}(h^n -d_1)\sigma^{-i}(h^n -d_2) \cdots \sigma^{-i}(h^n-d_{N_b}) \\
    &= \prod_{i=0}^{m-1} q^{-in}(h^n-q^{in}d_1)q^{-in}(h^n -q^{in}d_2) \cdots q^{-in}(h^n-q^{in}d_{N_b}).
\end{align*}
For any $l \in \ZZ$ and $\alpha$ such that $0 \leq \alpha \leq m-1$,
\begin{align}
\label{eq.Acong}
(\sigma^m)^l(h^n-q^{\alpha n} d_i) = q^{mln}h^n - q^{\alpha n} d_i = q^{mln}(h^n-q^{(\alpha-ml)n}d_i).
\end{align}

Assume $A(H)$ has congruent roots, so there exists $l \in \ZZ$ and $0 \leq \alpha \leq \beta \leq m-1$ such that $q^{(\alpha-ml)n}d_i = q^{\beta n}d_j$ for some $i,j$.
Because $q$ is  not a root of unity, we may assume $i \neq j$.
Thus, $d_i = q^{(\beta-\alpha+ml)n} d_j$. We may assume above without loss that $\alpha=0$, so that $d_i = q^{(\beta+ml)n} d_j$. Thus, $b(h^n)$ has congruent roots $d_i$ and $d_j$. By (1), $a(h)$ has congruent roots. 

Now suppose that $a(h)$ has congruent roots, so $b(h^n)$ has congruent roots $d_1$ and $d_2$ by (1).
Then there exists $k\in \ZZ$ such that $d_1=q^{kn} d_2$.
Set $\beta \equiv k \mod m$ so that $0 \leq \beta \leq m-1$. By computations in the previous paragraph, $d_1$ and $q^{\beta n} d_2$ are congruent roots of $A(H)$.

(3) Assume $A(H)$ has multiple roots. Without loss of generality, and by (2), we can assume these are $d_1$ and $d_2$ with $d_1=q^{kn}d_2$ where $0 \leq k \leq m-1$. If $k=0$, then $b(h^n)$ has multiple roots so $a(h)$ has multiple roots by (1). If $k>0$, then $d_i$ and $d_j$ are congruent roots of $b(h^n)$, so $a(h)$ has congruent roots by (1).

(4) Suppose $a(h)$ has multiple roots. Then $b(h^n)$ has multiple roots by (1) and so $A(H)$ has multiple roots. Now suppose $a(h)$ has congruent roots $c_1$ and $c_2$ with $c_1=q^k c_2$, $0 < k \leq m-1$. Then $b(h^n)$ has congruent roots $d_1$ and $d_2$ with $d_1=q^{kn} d_2$ and these are multiple roots of $A(H)$. Conversely, if $A(H)$ has multiple roots but $a(h)$ has no multiple roots, then $a(h)$ has the claimed congruent roots by (3).
\end{proof}

\begin{remark}
The claim for multiple roots is not true without the root of unity hypothesis in Lemma \ref{lem.roots1} (1). For example, if $q$ is a primitive third root of unity and $a(h)=(h-1)(h-q)(h-q^2)$, then all of the roots of $a(h)$ are congruent. However, if $\ord(\gamma)=n=3$, then $b(h^n)=h^3-1$ and so $b(h^n)$ has no \emph{distinct} congruent roots. However, the remainder of (1) still holds without the hypothesis.

The root of unity hypothesis is also necessary in Lemma \ref{lem.roots1} (3). Suppose $a(h)=h-1$ with $n=1$ and $m=3$. Then, up to scalar multiple, $A(H)=(h-1)(h-q)(h-q^2)$. If $q=-1$, then $A(H)$ has multiple roots while $a(h)$ has neither multiple nor distinct congruent roots. 
\end{remark}

\begin{lemma}
\label{lem.rootofunity}
Let $R$ and $\eta$ satisfy Hypothesis \ref{hyp.gwa} with $q$ a root of unity, $q \neq 1$, and $a(0) \neq 0$. Then $A(H)$ has multiple roots if and only if one of the following hold:
\begin{enumerate}[label=(\roman*)]
    \item $a(h)$ has multiple roots,
    \item $a(h)$ has congruent roots $c_i$ and $c_j$, $i \neq j$, such that $c_i=q^k c_j$ with $0 < k \leq m-1$, or
    \item there exists $k$ such that $0 < k \leq m-1$ and $\ord(q)$ divides $nk$.
\end{enumerate}
\end{lemma}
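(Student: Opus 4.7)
The plan is to exploit the explicit factorization of $A(H)$ already derived in the proof of Lemma \ref{lem.roots1}, namely that (up to a unit scalar)
\[
A(H) \;=\; \prod_{i=0}^{m-1} \prod_{l=1}^{N_b}(H - q^{in}d_l),
\]
so that the multiset of roots of $A(H)$ in $\kk$ is exactly $\{q^{in}d_l : 0\le i \le m-1,\ 1\le l \le N_b\}$. The lemma then becomes a combinatorial statement: $A(H)$ has multiple roots if and only if two distinct index pairs $(i,l)\ne(i',l')$ satisfy $q^{in}d_l=q^{i'n}d_{l'}$. My proof would set this equivalence up first and then pair off each source of repetition with one of (i)--(iii).

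For the forward direction I would split into three cases. If $l=l'$ but $i\ne i'$, then $q^{|i-i'|n}=1$, and taking $k=|i-i'|\in\{1,\dots,m-1\}$ gives (iii). If $l\ne l'$ and $d_l=d_{l'}$, then $b$ has multiple roots, and the multiplicity half of Lemma \ref{lem.roots1}(1) (which the remark after that lemma notes does \emph{not} require the nonroot-of-unity hypothesis) promotes this to multiple roots of $a(h)=b(h^n)$, giving (i). The remaining case is $l\ne l'$ and $d_l\ne d_{l'}$; here $i\ne i'$ and WLOG $k:=i-i'\in\{1,\dots,m-1\}$ satisfies $d_{l'}=q^{kn}d_l$. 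Picking an $n$-th root $c$ of $d_l$ (nonzero since $a(0)\ne 0$), the element $c':=q^k c$ has $(c')^n=d_{l'}$ and so is a root of $a(h)$; it is distinct from $c$ because $q^k=1$ would force $q^{kn}=1$ and hence $d_l=d_{l'}$. This yields congruent roots of $a$ with ratio $q^k$, $0<k\le m-1$, i.e.\ (ii).

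For the reverse direction I would check each of (i), (ii), (iii) individually by exhibiting a coincidence in the multiset. Condition (iii) is immediate: $q^{kn}=1$ means $(0,l)$ and $(k,l)$ label the same root. Condition (i) gives multiple roots of $b$ via Lemma \ref{lem.roots1}(1), and then $(0,l)$, $(0,l')$ with $d_l=d_{l'}$ coincide. For (ii), given distinct congruent roots $c$ and $c'=q^k c$ of $a$ with $0<k\le m-1$, the values $d_l:=c^n$ and $d_{l'}:=(c')^n=q^{kn}d_l$ are roots of $b$, and then $(k,l)$ and $(0,l')$ yield the common root $q^{kn}d_l$ of $A(H)$ (with distinct index pairs since $k>0$).

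The main obstacle is the subtle interaction in the root-of-unity regime: congruent roots of $a(h)$ with ratio $q^k$ need \emph{not} translate into congruent (or multiple) roots of $b$ when $q^{kn}=1$, and conversely, $b$ can have ``hidden'' coincidences arising purely from $\sigma$ having small order after passing to $\sigma^n$. This is precisely why condition (iii) must be listed separately from (i) and (ii), and the case analysis above is designed so that each of the three sources of collapse in $\{q^{in}d_l\}$ is captured by exactly one of the three bullet points, with the boundary scenario $q^{kn}=1$ consistently funnelled into (i) or (iii) rather than causing (ii) to fail.
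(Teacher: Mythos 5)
Your proposal is correct and follows essentially the same route as the paper: the same factorization of $A(H)$ with root multiset $\{q^{in}d_l\}$, the same three-way case split (same $d_l$ with different $i$ giving (iii), coinciding $d_l$'s giving (i), and distinct $d_l$'s related by $q^{kn}$ giving (ii) via an $n$-th root lift), and the same per-condition check for the converse. The extra details you supply (the explicit construction of $c'=q^kc$ and the verification that $c'\neq c$) are exactly what the paper delegates to ``the same argument as in Lemma \ref{lem.roots1} (1).''
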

\begin{proof}
That statements (i) and (ii) imply multiple roots in $A(H)$ follow similarly to Lemma \ref{lem.roots1} (4).
Assume (iii). Then $(h^n-d_1)$ and $(h^n-q^{kn} d_1)$ divide $A(H)$. It follows that $d_1$ is a multiple root of $A(H)$.

Suppose $A(H)$ has multiple roots. There are three cases to consider.
\begin{enumerate} 
\item Suppose $d_i = d_j$ for some $i \neq j$, then $b(h^n)$ has multiple roots and by the same argument as in Lemma \ref{lem.roots1} (1), $a(h)$ has multiple roots. 
\item Suppose $q^{n\alpha} d_i = q^{n\beta} d_j$ for some $d_i \neq d_j$ with $0 \leq \alpha \leq \beta \leq m-1$. Set $k=\beta-\alpha$. Since $d_i \neq d_j$, then $\ord(q) \nmid nk$ and so $b(h^n)$ has congruent roots. Thus, $a(h)$ has congruent roots satisfying (ii) by the same argument as in Lemma \ref{lem.roots1} (1).
\item Suppose $q^{n\alpha} d_i = q^{n\beta} d_i$ with $0 \leq \alpha < \beta \leq m-1$. Then $\ord(q) \mid nk$, where $k=\beta-\alpha$.\qedhere
\end{enumerate}
\end{proof}

We need to address the case where $0$ is a root of $a(h)$. In this case, it is possible for $a(h)$ to have multiple roots while $A(H)$ has no multiple roots. Here we do not need to differentiate between $q$ a root or nonroot of unity. 

\begin{lemma}
\label{lem.root0}
Let $R$ and $\eta$ satisfy Hypothesis \ref{hyp.gwa}. Let $k > 0$ be the multiplicity of $0$ as a root of $a(h)$ and write $a(h) =  h^k p(h)$. Then $A(H)$ has multiple roots if and only if one of the following hold:
\begin{enumerate}[label=(\roman*)]
    \item $m>1$,
    \item $k>n$,
    \item $q$ is not a root of unity and $p(h)$ satisfies the conditions of Lemma \ref{lem.roots1} (4), or
    \item $q$ is a root of unity and $p(h)$ satisfies one of the conditions of Lemma \ref{lem.rootofunity}.
\end{enumerate}
\end{lemma}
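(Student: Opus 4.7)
The approach is to factor the contribution of the root $0$ out of $A(H)$ and reduce the remaining question to the previous two lemmas. First I would exploit Lemma~\ref{lem.fixpoly}: since $n \mid i_0$, we have $a(h) \in \kk[h^n]$, so every nonzero monomial of $a(h) = h^k p(h)$ has degree divisible by $n$. The smallest-degree monomial is $p(0)\,h^k$ with $p(0) \neq 0$, whence $n \mid k$; it follows that $p(h) \in \kk[h^n]$ as well. Writing $H = h^n$, I would then compute
\[
A(H) = \prod_{i=0}^{m-1} \sigma^{-i}(h^k)\,\sigma^{-i}(p(h)) = q^{-km(m-1)/2}\, H^{mk/n}\, P(H),
\]
where $P(H) := \prod_{i=0}^{m-1}\sigma^{-i}(p(h))$. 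Since $p(0) \neq 0$ gives $P(0) = p(0)^m \neq 0$, the element $H = 0$ is a root of $A(H)$ of multiplicity exactly $mk/n$, and every other root of $A(H)$ is a (nonzero) root of $P(H)$ with the same multiplicity.

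With this decomposition the multiple roots of $A(H)$ split into two disjoint sources. The root $H = 0$ is a multiple root of $A(H)$ if and only if $mk/n \geq 2$; because $n \mid k$ and $k > 0$ force $k \geq n$, this inequality is equivalent to $m > 1$ or $k > n$, giving conditions (i) and (ii). On the other hand, since $P(0) \neq 0$, $A(H)$ has a multiple root different from $0$ if and only if $P(H)$ itself has a multiple root. Because $p(h) \in \kk[h^n]$ with $p(0) \neq 0$, the pair $(p,\eta)$ satisfies the running hypotheses of Lemma~\ref{lem.roots1} (if $q$ is not a root of unity) or Lemma~\ref{lem.rootofunity} (if $q$ is a root of unity), and the appropriate lemma translates ``$P(H)$ has a multiple root'' into precisely conditions (iii) or (iv). Combining the two sources yields the claimed equivalence.

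The only subtlety I foresee is the step that legitimizes applying Lemmas~\ref{lem.roots1} and~\ref{lem.rootofunity} to $p(h)$ in place of the original defining polynomial; this is what the first paragraph buys us, since the needed facts ($p \in \kk[h^n]$ and $p(0) \neq 0$) follow from the divisibility bookkeeping above. The degenerate case $p(h) \in \kk^\times$ (which occurs exactly when $a$ is a monomial) requires no special treatment: then $P(H)$ is a nonzero constant with no roots, conditions (iii) and (iv) are vacuously false, and the conclusion reduces to the $H = 0$ analysis, consistent with the statement.
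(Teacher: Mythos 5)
Your proof is correct and follows essentially the same route as the paper's: establish $n\mid k$ (so that $p(h)\in\kk[h^n]$), isolate the power of $H$ contributed by the root $0$ to obtain conditions (i) and (ii), and reduce the remaining multiple-root question to Lemmas~\ref{lem.roots1} and~\ref{lem.rootofunity} applied to $p$. Your single factorization $A(H)=q^{-km(m-1)/2}H^{mk/n}P(H)$ with $P(0)=p(0)^m\neq 0$ packages the paper's three-way case split ($k>n$; $k=n,\ m>1$; $k=n,\ m=1$) into one display and makes the ``only if'' direction and the root-of-unity case (iv) slightly more explicit than the published argument, but the underlying idea is identical.
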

\begin{proof}
Recall that by \cite[Theorem A]{SAV} we may assume that $a(h)$ has positive degree. Since the multiplicity of $0$ as a root of $a(h)$ is $k>0$, then $a_k \neq 0$. Hence, we set $i_0=k$ so that $n \mid i_0$ by hypothesis. But $a(h)$ is a polynomial in $h^n$ by Lemma \ref{lem.fixpoly} and so it follows that $p(h)$ is also a polynomial in $h^n$.

If $k>n$, then $k/n$ is an integer greater than 1 and so $H^{k/n}$ is a factor of $A(H)$.
If $k=n$ but $m>1$, then $A(H) = \prod_{i=0}^{m-1} (q^{-in}H) \sigma^{-i}(p(h))$. In either case, $A(H)$ has a multiple root of $0$. Now assume $m=1$ and $k=n$, then $A(H) = H P(H)$ for some polynomial $P(H) \in \kk[H]$ with $P(0) \neq 0$ and so we can apply Lemma \ref{lem.roots1} (4).
\end{proof}

\subsection{Global dimension}

Recall that for any and ring $R$, the \emph{projective dimension} of a left $R$-module $M$, denoted $\pdim M$, is the shortest length of a projective resolution
\[ 0 \to P_n \to P_{n-1} \to \cdots \to P_0 \to M \to 0,\]
or $\infty$ if no such resolution exsists.
The \emph{global dimension} of $R$ is defined as 
\[ \gldim R = \sup\{ \pdim M : M \text{ a left $R$-module} \}.\]

In commutative algebra, global dimension is a measure of regularity of a ring. In algebraic geometry, it can be used to determine when an affine variety is nonsingular.
For quantum (and classical) GWAs, the global dimension is dictated by the roots of the defining polynomial and the associated automorphism. Hence, we will be able to apply our analysis above to determine the global dimension of fixed rings of quantum GWAs.

Let $R=D(\sigma,a)$ be a GWA over a Dedekind domain $D$. Suppose $a \neq 0$ and $a$ is not a unit.
Let $a=\bp_1^{n_1}\cdots \bp_s^{n_s}$ be the factorization of $a$ into the product of distinct maximal ideals of $D$. 
By \cite{bavgldim,Jkrull},
\[
\gldim R =
\begin{cases}
\infty & \text{if $n_i \geq 2$ for some $i$,} \\
2 & \text{$n_i=1$ for all $i$ and there exists a positive integer $k$ such that $\sigma^k(\bp_i)=\bp_j$} \\
& \text{ for some $i,j$ or $\sigma^k(\bq)=\bq$ for some maximal ideal $\bq$ of $D$,} \\
1 & \text{otherwise.}
\end{cases}
\]

\begin{lemma}
\label{lem.gldim}
Let $R=D(\sigma,a)$ be a quantum GWA.
\begin{itemize}
\item $R$ has infinite global dimension if and only if $a$ has multiple roots.
\item $R$ has global dimension 2 if and only if $a$ has no multiple roots and one of the following hold:
\begin{itemize}
    \item $D=\kk[h]$,
    \item $q$ is a root of unity, or
    \item $a$ has a pair of congruent roots.
\end{itemize}
\item $R$ has global dimension 1 otherwise.
\end{itemize}
\end{lemma}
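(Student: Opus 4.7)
The plan is to reduce the lemma to a direct application of the Bavula--Jordan global-dimension formula recalled just above the statement. Both $\kk[h]$ and $\lnt$ are principal ideal domains and hence Dedekind, and their nonzero maximal ideals are exactly the principal ideals $(h-c)$ for $c \in \kk$ (respectively $c \in \kk^\times$). Consequently the factorization of $a$ into distinct maximal ideals coincides with its linear factorization $a = \prod_{i}(h-c_i)^{n_i}$, and $n_i$ is precisely the multiplicity of $c_i$ as a root of $a$. So some $n_i \geq 2$ iff $a$ has a multiple root, which yields the first bullet.

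Assume now that $a$ has no multiple roots, so the cited formula says $\gldim R$ is either $2$ or $1$, and equals $2$ precisely when some positive power of $\sigma$ carries one prime factor of $a$ to another prime factor or fixes some maximal ideal of $D$. A direct computation from $\sigma(h)=qh$ gives $\sigma((h-c)) = (h - q\inv c)$, so $\sigma^k((h-c)) = (h-q^{-k}c)$. Each of the three listed conditions then forces global dimension $2$: if $D=\kk[h]$ the maximal ideal $(h)$ is fixed by every power of $\sigma$; if $q$ is a root of unity of order $e$ then $\sigma^e$ fixes every maximal ideal of $D$; and if $a$ has distinct congruent roots with $c_j = q^k c_i$ (take $k>0$ after swapping $i$ and $j$ if necessary) then $\sigma^k((h-c_j)) = (h-c_i)$ puts two distinct prime factors of $a$ into a single $\sigma$-orbit.

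For the converse, suppose $D=\lnt$, $q$ is not a root of unity, and $a$ has no congruent roots. Then for every $c \in \kk^\times$ the orbit $\{q^{-k}c : k \in \ZZ\}$ is infinite, so no maximal ideal of $\lnt$ is fixed by any nontrivial power of $\sigma$, and the no-congruence hypothesis on the roots means the prime factors of $a$ lie in pairwise disjoint $\sigma$-orbits. The formula then forces $\gldim R = 1$, completing the trichotomy.

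The only step requiring a little care is verifying that the three listed conditions truly exhaust the Bavula--Jordan hypothesis for global dimension $2$; this reduces to observing that the maximal ideal $(h-c)$ of $D$ is fixed by some positive $\sigma^k$ exactly when $c = q^{-k}c$, which forces either $c=0$ (and hence $D = \kk[h]$) or $q^k = 1$. Everything else is bookkeeping on top of the already-recalled formula.
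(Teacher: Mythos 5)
Your proof is correct and follows essentially the same route as the paper: both arguments are a direct unwinding of the Bavula--Jordan global-dimension formula, identifying maximal ideals of $D$ with roots of $a$ and computing $\sigma^k((h-c))=(h-q^{-k}c)$ to match the three conditions. You simply make the exhaustiveness check (that a maximal ideal $(h-c)$ is $\sigma^k$-fixed only when $c=0$ or $q^k=1$) more explicit than the paper does.
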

\begin{proof}
The condition for infinite global dimension is just a restatement of the conditions above.
If $\sigma^k(\bp_i)=\bp_j$, then either $q$ is a root of unity or else $\bp_i$ and $\bp_j$ represent non-multiple congruent roots. Otherwise, $R$ has global dimension 2 if $\sigma^k(\bq)=\bq$ for some maximal ideal $\bq$ of $D$. This holds in the case $D=\kk[h]$ with $\bq=(h)$. In the case $D=\lnt$, this holds if and only if $q$ is a root of unity.
\end{proof}

The following example illustrates some of the behavior of global dimension in fixed rings of quantum GWAs.

\begin{example}
(1) Let $R=\lnt(\sigma,a)$ be a quantum GWA with 
$a=(h^2-1)(h^2-4)$ and $q=1/2$.
Then $a$ has congruent root pairs $\{1,2\}$ and $\{-1,-2\}$, so $\gldim R = 2$.
Let $\eta=\eta_{\gamma,\mu}$ with $n=2$ and $m=3$.
Then $H=h^2$ and $b(H)=(H-1)(H-4)$. Thus,
\begin{align*}
A(H)&= (H-1)(H-4)(4H-1)(4H-4)(16H-4)(16H-1) \\
    &= 4(H-1)^2 (H-4)(4H-1)(16H-1)(16H-4) \\
    &= 4^6 (H-1)^2(H-4)\left(H-\frac{1}{4}\right)^2\left(H-\frac{1}{16}\right).
\end{align*}
Hence, $A$ has multiple roots and so $\gldim R^\grp{\eta} = \infty$.

(2) Let $R=\poly(\sigma,a)$ be a quantum GWA with $a = h^2$.
Since $a$ has a multiple root of $0$ then $\gldim R = \infty$.
Let $\eta=\eta_{\gamma,\mu}$ with $n=2$ and $m=1$.
Then, $A(H)=H$ has no multiple root. Hence, $\gldim R^\grp{\eta} = 2$.
\end{example}

\begin{theorem}
\label{thm.gldim}
Suppose $R=D(\sigma,a)$ and $\eta$ satisfy Hypothesis \ref{hyp.gwa}. 
\begin{enumerate}
\item If $\gldim R = 1$, then $\gldim R^\grp{\eta} = 1$.
\item If $q$ is not a root of unity and $\gldim R = 2$, then $\gldim R^\grp{\eta} = \infty$ if and only if there exists roots $c_i$ and $c_j$ of $a(h)$ such that $c_i = q^kc_j$ for some $k$ with $0 < k \leq m-1$. Otherwise $\gldim R^\grp{\eta} = 2$.
\item If $q$ is a root of unity and $\gldim R = 2$, then $\gldim R^\grp{\eta} = \infty$ if and only if one of the conditions of Lemma \ref{lem.rootofunity} is satisfied.
\item If $\gldim R = \infty$, then $\gldim R^\grp{\eta} = 2$ if and only if $m=1$ and $0$ is a root of $a(h)$ with multiplicity $k=n$. Otherwise $\gldim R^\grp{\eta} = \infty$.
\end{enumerate}
\end{theorem}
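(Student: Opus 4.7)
The whole theorem is a direct translation between two applications of Lemma \ref{lem.gldim}, one to $R$ and one to $R^{\grp{\eta}}$, with the dictionary between them supplied by Lemmas \ref{lem.roots1}, \ref{lem.rootofunity}, and \ref{lem.root0}. The starting point is Corollary \ref{cor.fixGWA1}, which identifies $R^{\grp{\eta}}$ as a quantum GWA $D^{\grp{\eta}}(\sigma^m, A)$ with $H=h^n$, base ring $\kk[H]$ or $\kk[H^{\pm 1}]$ according to whether $D=\kk[h]$ or $\kk[h^{\pm 1}]$, and new parameter $q'=q^{mn}$. In particular $q'$ is a root of unity iff $q$ is, and the maximal ideal $(H)\subset \kk[H]$ is $\sigma^m$-stable. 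Hence Lemma \ref{lem.gldim} applies to $R^{\grp{\eta}}$ with conditions formulated in terms of the roots of $A(H)$, which the three root-analysis lemmas rephrase in terms of the roots of $a(h)$.

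For parts (1)--(3), $\gldim R<\infty$ forces $a(h)$ to be multiple-root free. In part (1) we further have $D=\lnt$, $q$ a nonroot of unity, and no congruent roots of $a$. Lemma \ref{lem.roots1}(2) then shows $A(H)$ has no congruent roots, and Lemma \ref{lem.roots1}(4) shows $A(H)$ has no multiple roots; since $q'$ is also a nonroot of unity and $D^{\grp{\eta}}=\lnt$, Lemma \ref{lem.gldim} forces $\gldim R^{\grp{\eta}}=1$. In part (2), $A(H)$ has multiple roots iff the second clause of Lemma \ref{lem.roots1}(4) holds (the first being ruled out by $a$ multiple-root free), yielding the stated dichotomy between $\infty$ and $2$ — when $\gldim R^{\grp{\eta}}$ is not $\infty$, Lemma \ref{lem.roots1}(2) together with the hypothesis $\gldim R=2$ produces enough congruent roots in $A(H)$, or else $D^{\grp{\eta}}=\kk[H]$ already forces $\gldim R^{\grp{\eta}}=2$. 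Part (3) is strictly parallel, replacing Lemma \ref{lem.roots1} by Lemma \ref{lem.rootofunity}: the three listed conditions are exactly the criterion for $A(H)$ to acquire multiple roots.

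The substantial case is part (4), where $\gldim R=\infty$ forces $a(h)$ to have a multiple root. By Lemma \ref{lem.gldim} we must find exactly when $A(H)$ has \emph{no} multiple root. Lemma \ref{lem.roots1}(1) shows that a nonzero multiple root of $a$ produces a multiple root of $b(h^n)$ and hence of $A(H)$, so the multiple root of $a(h)$ must be $0$. Write $a(h)=h^k p(h)$ with $p(0)\neq 0$ and $k\geq 2$. Since $n\mid i_0$ and we may take $i_0=k$, the integer $n$ divides $k$. Lemma \ref{lem.root0} lists the four ways $A(H)$ can acquire multiple roots; cases (iii) and (iv) are excluded since $p(h)$ (having roots congruent exactly to the nonzero roots of $a$) produces multiple roots of $A(H)$ exactly when $a$ does away from $0$, and we will rule this out in the relevant subcase. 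The remaining conditions $m>1$ and $k>n$ must both fail, giving $m=1$ and $k=n$. Conversely, when $m=1$ and $k=n$, $A(H)=a(h)=H\,\tilde p(H)$ with $\tilde p(0)\neq 0$, and $\gldim R^{\grp{\eta}}=2$ follows from the fixedness of $(H)$ under $\sigma^m$ and Lemma \ref{lem.gldim}.

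The main obstacle will be part (4), specifically the interplay between the divisibility requirement $n\mid k$ and the conditions in Lemma \ref{lem.root0}: one must carefully argue that all multiple roots of $A(H)$ arising from the factor $p(h)$ in $a(h)=h^k p(h)$ are already forbidden by $\gldim R=\infty$ being caused by the root $0$ (otherwise $p(h)$ itself would have a multiple root, reducing to the nonzero case already handled). A second mild subtlety, present in parts (2) and (3), is the degenerate reading of ``$c_i=q^kc_j$ with $0<k\leq m-1$'' when $c_i=c_j=0$; this case is subsumed into the root-$0$ analysis of Lemma \ref{lem.root0}, so it does not create any inconsistency.
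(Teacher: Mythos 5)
Your proposal is correct and follows essentially the same route as the paper: apply Lemma \ref{lem.gldim} to both $R$ and $R^{\grp{\eta}}$ and translate between the root configurations of $a(h)$ and $A(H)$ via Lemmas \ref{lem.roots1}, \ref{lem.rootofunity}, and \ref{lem.root0}, with part (4) reduced (exactly as in the paper) to the case where $0$ is the only multiple root of $a$. One small citation slip: in part (1), the absence of multiple roots in $A(H)$ is the contrapositive of Lemma \ref{lem.roots1}(3) (the paper cites (2) and (3)), not a consequence of (4), which only speaks to the situation where $a$ already has multiple or congruent roots.
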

\begin{proof}
(1) Suppose $\gldim R = 1$. By Lemma \ref{lem.gldim}, $D=\lnt$, $q$ is a nonroot of unity, and $a$ has neither multiple nor congruent roots. By Lemmas \ref{lem.roots1} (2) and (3), $A(H)$ also has neither multiple nor congruent roots. Hence, $\gldim R^\grp{\eta}=1$.

(2) By hypothesis, $a(h)$ has no multiple roots. Hence, if $A(H)$ has multiple roots, then $a(h)$ must have distinct congruent roots. The result now follows from (1) and from Lemma \ref{lem.roots1} (4).

(3) By hypothesis, $a(h)$ does not have multiple roots. Thus, $A(H)$ has multiple roots if and only if either the (ii) or (iii) of Lemma \ref{lem.rootofunity} is satisfied.

(4) If $a(h)$ has a multiple root other than $0$, then $A(H)$ has multiple roots by Lemma \ref{lem.roots1} (4). Now assume that $a(h)$ has only a multiple root of $0$. Then the result follows from Lemma \ref{lem.root0}.
\end{proof}

\begin{remark}
If we restrict to the case $D=\lnt$ and $q$ a nonroot of unity, then Theorem \ref{thm.gldim} is completely analogous to \cite[Corollary 2.12]{GW1}.
\end{remark}

The notion of a Calabi-Yau algebra was developed by Ginzburg as a way to port Calabi-Yau geometry to the language of noncommutative algebra \cite{ginz}. The more general notion of twisted Calabi-Yau algebras has drawn considerable interest of late, especially with its connection to noncommutative projective geometry \cite{RRZ,RR2}.

An algebra $A$ is \emph{homologically smooth} if it has a finitely generated projective resolution of finite length in $A^e=A \tensor_{\kk} A^\text{op}$. The algebra $A$ is \emph{twisted Calabi-Yau} of dimension $d$ if it is homologically smooth and there exists an invertible bimodule $U$ of $A$ such that $\Ext_{A^e}^i = \delta_{id} U$. If $U=A$, then $A$ is said to be \emph{Calabi-Yau}.

By work of Liu \cite[Theorem 4.5]{L}, a GWA $R=\kk[h](\sigma,a)$ is homologically smooth when $a$ has no multiple roots. By the above, this corresponds to the case that $R$ has finite global dimension. Moreover, in this case, $R$ is twisted Calabi-Yau with Nakayama automorphism $\nu$ given by
\[ \nu(x)=qx, \quad \nu(y)=q\inv y, \quad \nu(h)=h.\]
The next result now follows directly from Theorem \ref{thm.gldim}.

\begin{corollary}
\label{cor.CY}
Suppose that $R=\kk[h](\sigma,a)$ and $\eta$ satisfy Hypothesis \ref{hyp.gwa}. If $R$ is twisted Calabi-Yau, then $R^\grp{\eta}$ is twisted Calabi-Yau if and only if $R$ satisfies the hypotheses of (1) or (2) in Theorem \ref{thm.gldim}.
\end{corollary}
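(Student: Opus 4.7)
The plan is to convert the twisted Calabi-Yau property into a finite-global-dimension statement on both sides of the equivalence and then invoke Theorem \ref{thm.gldim}. The discussion preceding the corollary cites Liu's theorem to characterize the twisted Calabi-Yau quantum GWAs $\kk[h](\sigma,a)$ as exactly those for which $a$ has no multiple roots; by Lemma \ref{lem.gldim}, these are precisely the $R$ with finite global dimension, which in this setting (i.e., $D=\kk[h]$) must equal $2$.

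First I would observe that by Corollary \ref{cor.fixGWA1} the fixed ring $R^\grp{\eta}$ is itself a quantum GWA of the form $\kk[H](\sigma^m, A)$ with $H=h^n$ and $A(H) = \prod_{i=0}^{m-1} \sigma^{-i}(a)$. Since $\kk[H]$ is a polynomial ring in a single variable, Liu's theorem applies verbatim to $R^\grp{\eta}$, yielding the equivalence that $R^\grp{\eta}$ is twisted Calabi-Yau if and only if $A(H)$ has no multiple roots, if and only if $\gldim R^\grp{\eta} < \infty$. Moreover, when these equivalent conditions hold, Liu's theorem supplies an explicit Nakayama automorphism analogous to that listed for $R$.

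With this reformulation, the corollary becomes purely a global dimension statement: under the standing assumption that $R$ is twisted Calabi-Yau (so $\gldim R$ is finite and case (4) of Theorem \ref{thm.gldim} is excluded), the fixed ring is twisted Calabi-Yau precisely when $\gldim R^\grp{\eta}$ is finite, and Theorem \ref{thm.gldim} tells us this happens exactly under the hypotheses of clause (1) or (2). I do not expect any real obstacle: the substantive work has been done in Liu's theorem, Corollary \ref{cor.fixGWA1}, and Theorem \ref{thm.gldim}, and the only step to verify is that Liu's theorem is applicable to the fixed ring, which is immediate from the explicit form $\kk[H](\sigma^m,A)$ given by Corollary \ref{cor.fixGWA1}.
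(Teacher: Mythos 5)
Your proposal matches the paper's proof, which consists of the single remark that the result ``follows directly from Theorem \ref{thm.gldim}'' together with the preceding citation of Liu's theorem; you spell out the identical reduction (twisted Calabi--Yau $\Leftrightarrow$ no multiple roots of the defining polynomial $\Leftrightarrow$ finite global dimension, applied to both $R$ and to $R^{\grp{\eta}}=\kk[H](\sigma^m,A)$ via Corollary \ref{cor.fixGWA1}). The one caveat --- inherited from the corollary's own wording rather than introduced by you --- is that the hypotheses of Theorem \ref{thm.gldim}(2) alone do not guarantee $\gldim R^{\grp{\eta}}<\infty$ (congruent roots $c_i=q^kc_j$ with $0<k\leq m-1$ still force infinite global dimension), so your final sentence, like the statement itself, must be read as including the conditions under which cases (1) and (2) actually conclude finiteness.
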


\subsection{Rigidity}
In \cite{Sm1}, Smith proved that a fixed ring of the first Weyl algebra by a nontrivial group can never be isomorphic to the first Weyl algebra. This result was extended by Alev and Polo to the $n$th Weyl algebra \cite{AP}. More recently, Tikaradze has shown that the first Weyl algebra is not the fixed ring of any domain \cite{TIK}. If $R=D(\sigma,a)$ is a classical GWA with $\deg_h(a)=2$ and $H$ a nontrivial finite cyclic group of filtered automorphisms, then $R^H \niso R$ by \cite[Corollary 2.11]{GW1}. We have an analogous result for quantum GWAs below.

\begin{proposition}
If $R$ and $\eta$ satisfy Hypothesis \ref{hyp.gwa} and $\eta$ is not the identity, then $R^\grp{\eta} \niso R$.
\end{proposition}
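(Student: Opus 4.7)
The plan is to assume $R^{\grp{\eta}} \iso R$ and derive $\eta = \id_R$, contradicting the hypothesis. By Corollary \ref{cor.fixGWA1} and the discussion following it, $R^{\grp{\eta}}$ is itself a quantum GWA with parameter $q^{mn}$ and defining polynomial $A(H) = \prod_{i=0}^{m-1} b(q^{-in}H)$, where $a(h)=b(h^n)$ via Lemma \ref{lem.fixpoly}; since $D^{\grp{\eta}} \iso D$ as $\kk$-algebras, the isomorphism criterion \cite[Theorem A]{SAV} recalled in Section \ref{sec.autos} applies to any hypothetical $R \iso R^{\grp{\eta}}$. It yields two constraints: (i) $q^{mn}=q^{\pm 1}$, and (ii) $a(h) = \alpha \cdot A(\beta h^{\epsilon})$ for some $\alpha \in D^\times$, $\beta \in \kk^\times$, $\epsilon \in \{\pm 1\}$ (with $\epsilon=1$ if $D=\kk[h]$).

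First I would dispose of the case where $q$ is not a root of unity: here (i) forces $mn = \pm 1$, hence $m=n=1$, so $\eta = \eta_{1,1}$ acts trivially on $\{h,y,x\}$ and equals $\id_R$, a contradiction. When $q$ is a root of unity, (i) is vacuous and I would instead exploit (ii). Using the normalizations from \cite[Theorem A]{SAV}, I may assume $a \in \kk[h]$ is monic of positive degree, and additionally (when $D=\lnt$) that $a(0) \neq 0$; then $A(0)=b(0)^m \neq 0$ as well. Matching the supports on the two sides of (ii) forces $\alpha \in \kk^\times$ when $\epsilon=1$, or $\alpha = \lambda h^{\deg_H A}$ for some $\lambda \in \kk^\times$ when $\epsilon=-1$. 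In either case $\deg_h a = \deg_H A = m \deg_H b = m (\deg_h a)/n$, giving $m=n$; combined with $\gcd(m,n)=1$ this forces $m=n=1$ and $\eta = \id_R$, again a contradiction.

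The root-of-unity case will be the main hurdle, since the parameter constraint (i) alone is uninformative there and one must extract content from the defining-polynomial relation (ii). The $D=\lnt$ subcase requires the extra care above because Laurent units $\alpha = \lambda h^k$ and the substitution $h \mapsto \beta h^{-1}$ can both shift the support of the defining polynomial; once $a(0) \neq 0$ is arranged via \cite[Theorem A]{SAV}, these possibilities are pinned down and a straightforward degree comparison completes the argument.
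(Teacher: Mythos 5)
Your proof is correct and follows essentially the same route as the paper: both reduce to the degree comparison $N_a = N_A = mN_a/n$, hence $m=n$, which together with $\gcd(m,n)=1$ forces $\eta=\id$. Your preliminary case split on whether $q$ is a root of unity is harmless but unnecessary, since the degree argument (which you carry out carefully, including the Laurent-unit normalization) already covers all cases.
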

\begin{proof}
Suppose $R^\grp{\eta} \iso R$. Then by \cite[Theorem A]{SAV}, we must have $\deg_h(a)=\deg_H(A)$. 
Hence, $N_a=(N_a/n)m$ and so $1=m/n$.
By hypothesis, $\gcd(m,n)=1$. Thus, $m=n=1$. That is, $\eta$ is trivial.
\end{proof}

\subsection{Simplicity}

Let $R=D(\sigma,a)$ be a (not necessarily quantum) GWA. By \cite{B3,Jprim}, $R$ is simple if and only if 
\begin{enumerate}
    \item $D$ has no proper $\sigma$-stable ideal,
    \item no power of $\sigma$ is an inner automorphism of $D$, 
    \item $D=Da+D\sigma^n(a)$ for all $n \geq 1$, and
    \item $a$ is not a zero divisor in $D$.
\end{enumerate}

\begin{lemma}
Let $R=D(\sigma,a)$ be a quantum GWA.
Then $R$ is simple if and only if 
$D=\lnt$, $q$ is a nonroot of unity, and $a$ has no (non-multiple) congruent roots.
\end{lemma}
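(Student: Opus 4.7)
The plan is to check the four conditions of Bavula--Jordan simplicity one by one and match them against the hypotheses $D=\lnt$, $q$ a nonroot of unity, and $a$ having no non-multiple congruent roots. Since $D$ is commutative, an inner automorphism of $D$ is the identity, so condition (2) simply asks that no nontrivial power of $\sigma$ equals $\id_D$. Likewise, $D$ is a domain in both cases we consider, so (4) is automatic whenever $a\neq 0$, which is part of the standing setup for GWAs. This reduces the task to analyzing (1) and (3) in tandem with the root-of-unity vs.\ nonroot-of-unity dichotomy and the choice of $D$.

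First I would rule out $D=\kk[h]$: the principal ideal $(h)$ is proper and $\sigma$-stable since $\sigma(h)=qh$ with $q\in\kk^\times$, so (1) fails and $R$ cannot be simple. So assume $D=\lnt$. For condition (2), note that $\sigma^n=\id_D$ iff $q^n h=h$ iff $q^n=1$; hence (2) is equivalent to $q$ being a nonroot of unity. From now on assume $q$ is a nonroot of unity, and I will argue (1) and (3).

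For (1), let $I=(f)$ be a nonzero $\sigma$-stable ideal of $\lnt$. Writing $f=\sum_{j\in J} a_j h^j$ with $J\subset\ZZ$ finite and each $a_j\neq 0$, $\sigma$-stability of $(f)$ gives $\sigma(f)=\lambda f$ for some unit $\lambda\in\kk^\times$. Comparing coefficients forces $q^j=\lambda$ for every $j\in J$; since $q$ has infinite order this forces $|J|=1$, so $f$ is a unit monomial and $I=\lnt$. Thus (1) holds. For (3), since $\lnt$ is a PID, $Da+D\sigma^n(a)=D$ fails iff $a$ and $\sigma^n(a)=a(q^n h)$ share a root $\alpha\in\kk^\times$. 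This shared root forces $\alpha$ and $q^n\alpha$ to both be roots of $a$, and these are distinct because $q^n\neq 1$. Hence (3) fails iff there exist roots $c_i\neq c_j$ of $a$ with $c_i=q^n c_j$ for some $n\geq 1$, i.e.\ iff $a$ possesses a pair of distinct (non-multiple) congruent roots. Combining (1)--(4) gives the claimed equivalence.

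The one step that needs a little care is the $\sigma$-stable ideal computation in $\lnt$, since the ring is a Laurent polynomial ring rather than a polynomial ring; but once one writes $f$ in its Laurent expansion the coefficient matching is immediate, and the nonroot-of-unity hypothesis does the heavy lifting. The correspondence between shared roots of $a$ and $\sigma^n(a)$ and distinct congruent roots of $a$ is the technical heart of the argument for (3), and is precisely the reason the simplicity criterion is formulated in terms of congruent roots.
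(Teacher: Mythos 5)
Your proposal is correct and follows essentially the same route as the paper: both verify the four Bavula--Jordan simplicity conditions one by one, disposing of $D=\kk[h]$ via the $\sigma$-stable ideal $(h)$, identifying conditions (1) and (2) over $\lnt$ with $q$ being a nonroot of unity, noting (4) is automatic since $D$ is a domain, and translating (3) into the absence of distinct congruent roots (the paper phrases this as $\gcd(a,\sigma^n(a))=1$ for all $n\geq 1$). You simply supply more explicit detail for (1) and (3); the only micro-gloss is that a generator of a $\sigma$-stable ideal of $\lnt$ a priori satisfies $\sigma(f)=uf$ for a unit $u=ch^k$, but comparing supports forces $k=0$, so your coefficient argument goes through.
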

\begin{proof}
Since $(h)$ is a a $\sigma$-stable ideal of $D=\kk[h]$, then it follows that (1) holds if and only if $D=\lnt$ and $q$ is not a root of unity. Assume for the remainder that $D=\lnt$. Then (2) holds if and only if $q$ is not a root of unity (if $q^\ell=1$ then $\sigma^\ell=\id$). The ring $D$ has no zero divisors so it suffices to verify condition (3).
Note that (3) is equivalent to $\gcd(a,\sigma^n(a))=1$ for all $n \geq 1$, which holds if and only if $a$ has no congruent roots.
\end{proof}

\begin{proposition}
\label{prop.simple}
Suppose $R=\lnt(\sigma,a)$ and $\eta$ satisfy Hypothesis \ref{hyp.gwa}. Then $R^\grp{\eta}$ is simple if and only if $R$ is simple.
\end{proposition}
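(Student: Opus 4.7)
The plan is to apply the preceding lemma to both $R$ and $R^\grp{\eta}$. Writing $R^\grp{\eta}$ as a quantum GWA $\lnt(\sigma^m, A)$ in the variable $H = h^n$, its $q$-parameter is $q^{mn}$, which is a (non)root of unity precisely when $q$ is (since $mn \neq 0$). If $q$ is a root of unity, both $R$ and $R^\grp{\eta}$ fail simplicity, so I may assume $q$ is a nonroot of unity; the proposition then reduces to showing that $a(h)$ admits a pair of distinct $q$-congruent roots in $\ov\kk$ if and only if $A(H)$ admits a pair of distinct $q^{mn}$-congruent roots in $\ov\kk$.

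To set this up, I use Lemma \ref{lem.fixpoly} to write $a(h) = b(h^n)$. The root set of $a$ in $\ov\kk$ is then closed under multiplication by $n$-th roots of unity (since $a(\zeta h) = a(h)$ for any $n$-th root of unity $\zeta$), each root $d$ of $b$ equals $c^n$ for some root $c$ of $a$, and the roots of $A = \prod_{i=0}^{m-1} \sigma^{-i}(b)$ (counted with multiplicity) are the elements $q^{in} d$ with $0 \le i \le m-1$ and $d$ a root of $b$.

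For the direction $R$ not simple implies $R^\grp{\eta}$ not simple, suppose $c_1 \neq c_2$ are roots of $a$ with $c_1 = q^k c_2$ and $k \neq 0$. Setting $d_j = c_j^n$, we have $d_1 = q^{kn} d_2$ with $d_1 \neq d_2$ (as $q$ is a nonroot of unity). I would then choose $(i_1, i_2) \in \{0, \ldots, m-1\}^2$ with $i_1 - i_2 \equiv -k \pmod m$ and $i_1 - i_2 \neq -k$; such a choice exists because the set of possible differences $\{-(m-1), \ldots, m-1\}$ meets any residue class modulo $m$ in at most two elements, and since $k \neq 0$ at least one of them is not equal to $-k$. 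Then $q^{i_1 n} d_1$ and $q^{i_2 n} d_2$ are distinct roots of $A$ whose ratio is a power of $q^{mn}$, witnessing the failure of simplicity of $R^\grp{\eta}$.

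For the reverse direction, take $e_1 \neq e_2$ roots of $A$ with $e_1 / e_2 = q^{mn\ell}$ for some $\ell \in \ZZ$. Writing $e_j = q^{\alpha_j n} d_j$ with $\alpha_j \in \{0, \ldots, m-1\}$ and $d_j$ a root of $b$ gives $d_1/d_2 = q^{sn}$ for $s := m\ell + \alpha_2 - \alpha_1$; the distinctness $e_1 \neq e_2$ forces $d_1 \neq d_2$ (else $m \mid \alpha_1 - \alpha_2$, hence $\alpha_1 = \alpha_2$ and $e_1 = e_2$), so $s \neq 0$. Choosing roots $c_j$ of $a$ with $c_j^n = d_j$, the identity $(c_1/(q^s c_2))^n = 1$ yields $c_1 = \zeta q^s c_2$ for some $n$-th root of unity $\zeta \in \ov\kk$; setting $c_2' := \zeta c_2$, which is again a root of $a$ by orbit closure, gives distinct $q$-congruent roots $c_1 = q^s c_2'$ of $a$ (distinct because $q^s \neq 1$). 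The main obstacle is this reverse direction: the passage from roots of $A$ back to roots of $a$ must navigate a lossy correspondence---each root of $b$ arises from $n$ distinct roots of $a$---and it is precisely the orbit closure under $n$-th roots of unity, a consequence of $a(h) \in \kk[h^n]$, that permits the required adjustment of the lift.
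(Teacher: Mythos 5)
Your proof is correct and follows essentially the same route as the paper, whose own proof simply cites Lemma \ref{lem.roots1} (the equivalence of congruent roots for $a$ and for $A$, via the intermediate $b(h^n)$) together with the preceding simplicity criterion. Your version inlines a proof of that equivalence and is in fact slightly more careful than the cited lemma about producing genuinely \emph{distinct} congruent roots (as opposed to multiple roots, which do not obstruct simplicity), e.g.\ via the choice of residue representative in the forward direction and the root-of-unity adjustment of the lift in the reverse direction.
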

\begin{proof}
This follows directly from Lemma \ref{lem.roots1}.
\end{proof}

\begin{remark}
Let $R=\lnt(\sigma,h-1)$ be a quantum GWA. Then $R$ is isomorphic to a localization of the first quantum Weyl algebra. If $q$ is not a root of unity, then $R$ is simple by the above criteria. By Proposition \ref{prop.simple}, $R$ is not the fixed ring of any other quantum GWA by some $\eta$. It would be interesting to know whether a version of Tikaradze's theorem \cite{TIK} holds for $R$.
\end{remark}

\subsection*{Acknowledgements}
Phuong Ho was funded for this project through the Undergraduate Summer Scholars program at Miami University. The authors gratefully acknowledge this support. The authors thank Robert Won for helpful conversations and the anonymous referee for several suggested clarifications.

\bibliographystyle{amsplain}
\providecommand{\bysame}{\leavevmode\hbox to3em{\hrulefill}\thinspace}
\providecommand{\MR}{\relax\ifhmode\unskip\space\fi MR }
\providecommand{\MRhref}[2]{%
  \href{http://www.ams.org/mathscinet-getitem?mr=#1}{#2}
}
\providecommand{\href}[2]{#2}

\end{document}